\DeclareMathOperator*{\argmin}{arg\,min}
\DeclareMathOperator*{\minimize}{minimize}
\newcommand{\sREV}{\color{black}}
\newcommand{\eREV}{\color{black}}
\newtheorem{definition}{Definition}
\newtheorem{assumption}{Assumption}
\newtheorem{proposition}{Proposition}
\newtheorem{remark}{Remark}
\newtheorem{problem}{Problem}
\newcommand{\set}[1]{\left\{ #1\right\}}
\newcommand{\X}{\mathrm{X}}
\newcommand{\eps}{\epsilon}
\newcommand{\er}[2]{\mathrm{er}(#1,#2)}
\newcommand{\erhat}[2]{\widehat{\mathrm{er}}_m(#1,#2)}
\renewcommand{\phi}{\varphi}
\newcommand{\1}{\mathds{1}}
\newcommand{\Prb}{\mathbb{P}}
\newcommand{\mass}{m}%
\newcommand{\HH}{\mathcal{H}}
\newcommand{\MP}[1]{{\color{black}#1}}
\newacronym{milp}{MILP}{Mixed Integer Linear Program}
\newacronym{aeb}{AEB}{Autonomous Emergency Braking}
\newacronym{vc}{VC}{Vapnik-Chervonenkis}
\newacronym{pac}{PAC}{probably approximately correct}
\begin{document}

\begin{frontmatter}

\title{Finite sample learning of moving targets}
\thanks[footnoteinfo]{This work was partially supported by MUR under the  PRIN 2022 project “The Scenario Approach for Control and Non-Convex Design” (project number D53D23001440006) and by FAIR (Future Artificial Intelligence Research) project, funded by the NextGenerationEU program within the PNRR-PE-AI scheme (M4C2, Investment 1.3, Line on Artificial Intelligence).
}
\author[UOXF-CS]{Nikolaus Vertovec}\ead{nikolaus.vertovec@st-hughs.ox.ac.uk},    %
\author[UOXF-ENG]{Kostas Margellos}\ead{kostas.margellos@eng.ox.ac.uk},   %
\author[POLIMI]{Maria Prandini}\ead{maria.prandini@polimi.it}  %

\address[UOXF-CS]{Department of Computer Science, University of Oxford, OX1 3PJ, UK} 
\address[UOXF-ENG]{Department of Engineering Science, University of Oxford, OX1 3PJ, UK}  %
\address[POLIMI]{Department of Electronics, Information and Bioengineering, Politecnico di Milano, Milano 20133, Italy}  %
          
\begin{keyword}                           %
Statistical learning theory;  Randomized methods; Probably approximately correct learning; Data-driven algorithms; Drifting target concept. %
\end{keyword}                             %

\begin{abstract}                         
   We consider a moving target that we seek to learn from samples. Our results extend randomized techniques developed in control and optimization for a constant target to the case where the target is changing. We derive a novel bound on the number of samples that are required to construct a probably approximately correct (PAC) estimate of the target. Furthermore, when the moving target is a convex polytope, we provide a constructive method of generating the PAC estimate using a mixed integer linear program (MILP). The proposed method is demonstrated on an application to autonomous emergency braking. 
\end{abstract}

\end{frontmatter}

\section{Introduction}
The use of probabilistic and randomized methods to analyze and design systems affected by uncertainty has long been a key research area within the control community. Early attempts at dealing with uncertainty were focused on stochastic approaches with later research focusing on \emph{worst-case} settings. Probabilistic approaches to robustness emerged to alleviate conservatism of worst-case considerations by resorting to probabilistic information. This rapprochement between more traditional stochastic and robust paradigms facilitates uncertainty quantification based on data. To this end, we consider algorithms based on uncertainty randomization known as \emph{randomized algorithms}~\cite{Tempo2005}, which allow us to apply tools from statistical learning theory based on \gls{vc} theory to control~\cite{Vidyasagar2003, Alamo2009, Tempo2005}. 
In general, these developments can be cast as binary classification problems with the main focus being the provision of finite-sample complexity bounds. \Gls{vc} theoretic techniques require the so called \Gls{vc} dimension to be finite. The computation of the \Gls{vc} dimension is in general a difficult task for generic optimization problems. Under a convexity assumption, the so-called scenario approach has offered a theoretically sound and efficient methodology to provide \emph{a-priori} probabilistic feasibility guarantees for uncertain optimization programs, with uncertainty represented by means of scenarios and without resorting to \Gls{vc} theory \cite{Calafiore2006, Campi2008, Campi2009, Calafiore2010, Campi2018}. 
These developments have been recently extended to the non-convex case, however, they typically involve \emph{a posteriori} guarantees \cite{Campi2016, Garatti2022}.
Applications and sample complexity bounds of the aforementioned methodologies to control synthesis problems have been demonstrated in~\cite{Cloete2025, Dean2019, Campi2018a, Tempo2005}, while notable extensions involve trading feasibility to performance \cite{Campi2010, Romao2023, Romao2023a}, applications in game theory \cite{Fele2021}, and sequential methods~\cite{Tempo2005}. Connections between the scenario approach and statistical learning theory based on the notion of compression have been provided in \cite{Margellos2015, Campi2023}. 

The aforementioned approaches can be considered in the context of learning an unknown labeling mechanism, whereby we independently draw $m$ samples from a domain $\X \subseteq \mathbb{R}^n$, according to some possibly unknown probability distribution $\mathbb{P}$. Each sample is assigned a $\{0,1\}$-valued label according to an unknown \emph{target} labeling function, $f$. The learning problem involves characterizing sample complexity bounds for $m$, such that we can generate a hypothesis $h$ based on the labeled $m$-multisample that, with a prescribed confidence $1-\delta$, provides the same labeling with the target function when it comes to a new sample $x$ up to a predefined accuracy level $\epsilon$, i.e.,
\begin{align}
&\mathbb{P}^m\big\{(x_1,\ldots,x_m) \in \X^m:~ \nonumber \\
&\mathbb{P}\{x \in \X:~ h(x) \neq f(x)\} \leq \epsilon \big\} \geq 1-\delta, \label{eqn:track_og}
\end{align}
where $\mathbb{P}^m$ is the product probability measure.
An algorithm that generates a hypothesis satisfying the above statement is said to be \gls{pac} to accuracy $\eps$ if the left side of \eqref{eqn:track_og} approaches $1$ as $m\rightarrow \infty$ \cite[pg. 56]{Vidyasagar2003}. We will refer to the labeling mechanism as being \gls{pac} learnable to accuracy $\eps$ if there exists an algorithm that is \gls{pac} to accuracy $\eps$.

In this paper, we will study a similar problem of finding a hypothesis satisfying~\eqref{eqn:track_og}, however, with the notable difference that we consider a \emph{tracking problem} where the unknown labeling function is changing after each drawn sample. In light of this labeling mechanism changing in a structured manner as specified in the sequel, we will consider both the construction of the hypothesis as well as the minimum number of samples that are necessary, so as to, with a certain confidence, provide probabilistic bounds on the event of the hypothesis disagreeing with the subsequently received label. A similar tracking problem with an alternative structure of change imposed on the target is considered in \cite{Kuh1990, Barve1997, Long1999, Bartlett2000, Crammer2010}. Similar to the structure considered in this paper, \cite{Helmbold1994} considers a setting that allows for variations in the change between samples. In \cite{Long1999} the distribution according to which samples are drawn is also considered to be changing, while recent work, such as \cite{Hanneke2015}, has considered adapting to a variable rate of change of the target concept.

We first provide a formal mathematical formulation of the tracking problem considered in this paper in Section~\ref{sec:2:LearningMovingTargets}. Our main contributions can be summarized as follows:
\begin{enumerate}
    \item In Section~\ref{sec:3:FiniteSampleProbabilistic} we provide \emph{a-priori} bounds on the minimum number of samples needed to generate a \gls{pac} to accuracy $\eps$ hypothesis. This analysis capitalizes on the aforementioned references, and in particular the work of \cite{Helmbold1994}. However, we re-approach this formulation providing a \gls{pac}-type of result (that involves two layers of probability) rather than an expected value assessment. We also provide a remedy for a mathematical omission in the analysis of \cite{Helmbold1994}. 
    \item \MP{In Section~\ref{sec:4:hypothesisComputation} we provide a constructive method of generating a hypothesis from a finite set of samples using a \gls{milp} when the class of targets is that of convex polytopes. Note that the analysis in all aforementioned references is of existential nature, and this constitutes the first constructive approach for a hypothesis that enjoys such tracking properties.}
\end{enumerate}
We demonstrate numerically our theoretical results in Section~\ref{sec:4:RevisistedExample} on a case study that involves autonomous emergency braking and discuss practical improvements to excluding samples from consideration in the \gls{milp}. Finally, Section~\ref{sec:5:conclusion} provides some concluding remarks.

\section{Learning moving targets} \label{sec:2:LearningMovingTargets}
\subsection{Problem statement}
We consider the problem of learning a labeling mechanism that is changing in a structured manner (this structure will be specified in the sequel). To this end, we follow a sample-based approach, where each sample $x$ is generated independently from a domain $\X \subseteq \mathbb{R}^n$, endowed with a $\sigma$-algebra $\mathcal{X}$. Let $\mathbb{P}$ denote the fixed (potentially unknown) probability measure over $\mathcal{X}$.  
We refer to $(x_1,\ldots,x_m) \in \X^m$ as an $m$-multisample, where its elements $x_i \in \X$ are independently and identically distributed (i.i.d.) according to $\mathbb{P}$. For each $i=1,\ldots,m$, let $f_i(\cdot):~ \X \to \{0,1\}$ be a $\{0,1\}$-valued labeling function, referred to as a \emph{target function}. %

In our setting, each sample $x_i$ is labeled according to target function $f_i$, $i=1,\ldots,m$, giving rise to the \emph{labeled} $m$-multisample $\{(x_1,f_1(x_1)),\ldots, (x_m,f_m(x_m)) \}$. Notice that each sample is labeled by means of a different target function. As we consider the target functions to be unknown, we only have access to the labels of specific samples, namely $\{f_i(x_i)\}_{i=1}^m$. A natural question that we seek to answer is whether we can construct a labeling mechanism $h_m(\cdot):~ \X \to \{0,1\}$ %
that correctly (with a certain probability) predicts the label that would be assigned to the next sample $x$ by the unknown target function $f_{m+1}$. In other words, we seek to provide probabilistic guarantees that $h_m(x)=f_{m+1}(x)$, where $h_m$ is referred to as a \emph{hypothesis} and constitutes an approximation/prediction of $f_{m+1}$. Notice that we introduce the subscript $m$ to our hypothesis to highlight that this is constructed on the basis of the labeled $m$-multisample.
We refer to this problem, pictorially illustrated in Figure~\ref{fig:problem_statement}, as a tracking problem, as we seek to track a moving labeling mechanism. 

\begin{figure}[t!]
 	\centering
	\includegraphics[width=\columnwidth]{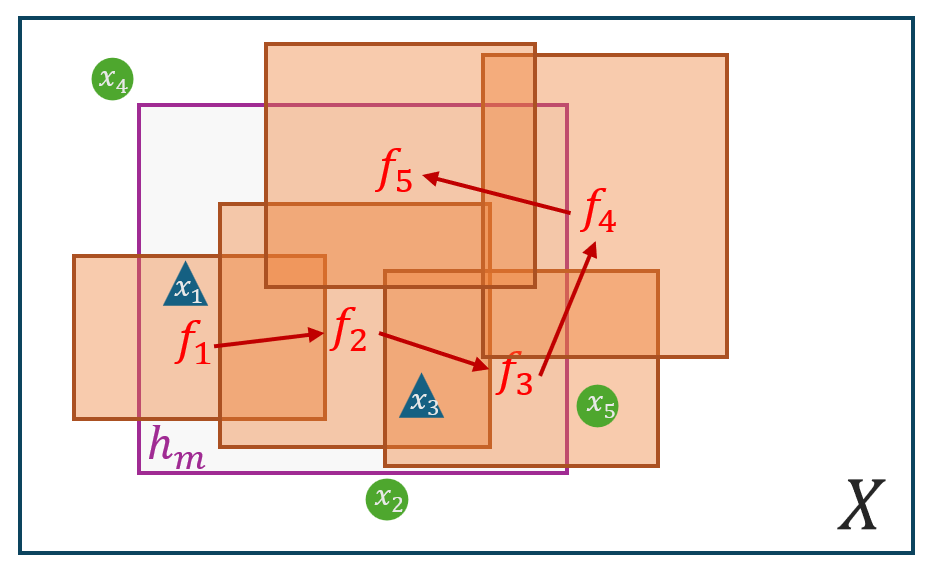}
	\caption{At each iteration, we receive a single sample along with a $\{0,1\}$-valued label. To illustrate this, consider the labeling mechanism as an indicator function over the orange set. The orange set will change between each drawn sample (we illustrate this by depicting the orange sets across multiple iterations). The green circles indicate a $0$-label, while the blue triangles represent a $1$-label. We seek to find a hypothesis on the basis of the labeling $\{(x_1,f_1(x_1)),\ldots, (x_m,f_m(x_m)) \}$ that, with certain confidence, will agree with the subsequent (unknown) target function $f_{m+1}$ on a new sample. We depict an example of such a hypothesis with the purple rectangle.}\label{fig:problem_statement}
\end{figure}

While the target functions are considered to be unknown we will make the following assumption on the target and hypotheses function class, 
\MP{whose richness as a class of $\{0,1\}$-valued labeling functions can be defined in terms of the Vapnik-Chervonenkis (VC) dimension\footnote{\MP{Given $S_m=(x_1,\dots,x_m)\in X^m$ and a class $\mathcal F$ of binary-valued labeling functions, let $F_{S_m}=\{(f(x_1),\dots, f(x_m)),\ f\in \mathcal F\}$ be the set of all possible labeling of $S_m$. Then the VC dimension of $\mathcal F$ is the smallest $m$ such that $\sup_{S_m\in X^m}|F_{S_m}|=2^m$.}} \cite{Vidyasagar2003}.}
\begin{assumption}
\label{assum:H}
    All target and hypotheses functions belong to the same class $\HH$, i.e., $f_1,\ldots,f_m,f_{m+1},h_m \in \HH$, and $\HH$ is assumed to be known. We further assume that $\HH$ has a finite $\gls{vc}$ dimension.
\end{assumption}
\begin{remark}
In Section~\ref{sec:4:hypothesisComputation} we will consider $\HH$ to be the class of non-empty convex polytopes with a certain maximum number of facets, but make no such restriction for the main results of Section~\ref{sec:3:FiniteSampleProbabilistic}.
\end{remark}

We formalize the tracking problem below.
\begin{problem}[Tracking Problem] \label{problem}
Let $\eps, \delta \in (0,1)$ be any fixed accuracy and confidence level, respectively. Determine $m_0(\epsilon,\delta)$ such that for any number of labeled samples $m \geq m_0(\epsilon,\delta)$, namely, $\big \{(x_1,f_1(x_1)),\ldots, (x_m,f_m(x_m))\big \}$, we can construct a hypothesis $h_m \in \HH$ such that 
\begin{align}
&\mathbb{P}^m\big\{(x_1,\ldots,x_m) \in \X^m:~ \nonumber \\
&\mathbb{P}\{x \in \X:~ h_m(x) \neq f_{m+1}(x)\} \leq \eps_0+\epsilon \big\} \geq 1-\delta, \label{eqn:track}
\end{align}
where $\eps_0 \in (0,1)$.
\end{problem}
In words, with confidence at least $1-\delta$, the probability that the constructed hypothesis $h_m$ produces a label for a new sample $x$ that does not agree with the target function $f_{m+1}$ is at most $\eps_0+\epsilon$. Notice that the statement we seek to provide is within the realm of \gls{pac} learning. Yet unlike more standard \gls{pac} statements, the accuracy is deteriorated by $\eps_0$; this is not user-chosen but rather depends on how the target function is moving. We specify this in the next section and show that its presence is the price to pay for providing such statements for moving targets, while $\eps_0=0$ for the specific case of a constant target.

\subsection{Mathematical preliminaries and assumptions} \label{sec2.2:Ass}
To simplify notation, for any labeling functions $f, h$ we define their probabilistic and empirical disagreement, respectively, as
\begin{align}
    \er{f}{h} & \coloneqq \Prb\{x\in \X : h(x) \neq f(x)\}, \\
    \erhat{f}{h} & \coloneqq \frac{1}{m} \sum_{i=1}^m | f(x_i) -  h(x_i)|, \label{eqn:erhat}
\end{align}
where the empirical disagreement is computed on an $m$-multisample $\{(x_1,f_1(x_1)),\ldots, (x_m,f_m(x_m)) \}$, hence we introduce the subscript $m$ in the definition of $\erhat{\cdot}{\cdot}$ to emphasize this dependence. Notice that in \eqref{eqn:erhat}, $| f(x_i) -  h(x_i)| = 1$ if $f, h$ disagree on $x_i$, and zero otherwise. Under these definitions, for $\epsilon, \delta \in (0,1)$, the statement of \eqref{eqn:track} can be equivalently written as $\mathbb{P}^m\{(x_1,\ldots,x_m) \in \X^m:~ \er{f_{m+1}}{h_m} \leq \eps_0+\epsilon \} \geq 1-\delta$.

We first provide some preliminary results that will be invoked in the subsequent developments. Proposition \ref{prop:hoeffding} below is a direct consequence of Hoeffding's inequality (see e.g., \cite{Hoeffding1963}, \cite{Tempo2005}). 
\begin{proposition}\label{prop:hoeffding}
Let $p_1,\ldots,p_m \in [0,1]$, and 
consider independent Bernoulli random variables $Y_1,\ldots,Y_m$ such that $\Prb\{Y_i = 1\} = p_i$ and $\Prb\{Y_i = 0\} = 1-p_i$, for all $i=1,\ldots,m$.
For any $\tau > 0$ we then have that
\begin{align}
    \Prb^m\big\{\sum_{i = 1}^m Y_i -\sum_{i = 1}^m p_i > \tau \big \} \leq e^{-\frac{2 \tau^2}{m}}. \label{eqn:hoeffding}
\end{align}
\end{proposition}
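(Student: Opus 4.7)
The plan is to follow the standard Chernoff--Hoeffding argument specialized to Bernoulli variables. Since the statement is explicitly flagged as a direct consequence of Hoeffding's inequality, the role of this proof proposal is mainly to recall the chain of steps rather than to invent anything new; the bound with constant $2/m$ in the exponent (as opposed to a weaker constant) is exactly the tight Hoeffding form for $[0,1]$-valued summands, so the proof has to produce that sharp constant.

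First, I would define $S = \sum_{i=1}^m (Y_i - p_i)$ and, for an arbitrary parameter $\lambda > 0$, apply the exponential Markov (Chernoff) inequality to obtain
\begin{align*}
\Prb\{S > \tau\} \;=\; \Prb\{e^{\lambda S} > e^{\lambda \tau}\} \;\leq\; e^{-\lambda \tau}\, \mathbb{E}\!\left[e^{\lambda S}\right].
\end{align*}
By independence of the $Y_i$'s, the moment generating factorizes as $\mathbb{E}[e^{\lambda S}] = \prod_{i=1}^m \mathbb{E}[e^{\lambda (Y_i - p_i)}]$. The next step would be the key technical ingredient, namely Hoeffding's lemma: for a random variable $Z$ with $\mathbb{E}[Z] = 0$ and $Z \in [a,b]$ almost surely, one has $\mathbb{E}[e^{\lambda Z}] \leq e^{\lambda^2 (b-a)^2 / 8}$. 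Each centered variable $Y_i - p_i$ takes values in $[-p_i, 1 - p_i]$, an interval of length $1$, so Hoeffding's lemma yields $\mathbb{E}[e^{\lambda(Y_i-p_i)}] \leq e^{\lambda^2/8}$.

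Combining these steps gives $\Prb\{S > \tau\} \leq \exp\!\bigl(-\lambda \tau + m \lambda^2/8\bigr)$. Minimizing the right-hand side over $\lambda > 0$ is an elementary calculus exercise: the optimum occurs at $\lambda^* = 4\tau/m$, which when substituted back produces exactly $\exp(-2\tau^2/m)$, matching \eqref{eqn:hoeffding}.

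The only nontrivial step is Hoeffding's lemma itself, which I would simply cite rather than re-derive (its proof uses convexity of $e^{\lambda z}$ to bound it by the chord through $(a, e^{\lambda a})$ and $(b, e^{\lambda b})$, then Taylor expands the log-moment generating function of the resulting two-point distribution). Aside from invoking this lemma, the argument is routine, and I do not anticipate any real obstacle; the statement really is just Hoeffding's inequality stated for $[0,1]$-valued indicators.
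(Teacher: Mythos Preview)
Your argument is correct and is exactly the standard Chernoff--Hoeffding derivation the paper has in mind; in fact the paper does not supply a proof at all but simply cites Hoeffding's inequality, so your sketch is strictly more detailed than what appears there. There is nothing to add.
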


The following result is a PAC-type bound that holds for any target function $f \in \HH$. This is \cite[Theorem 7]{Alamo2009} adapted to our notation.
\begin{thm}\label{thm:1}
Fix $\epsilon, \delta \in (0,1)$ and $\rho \in [0,1)$. Fix any $f \in \HH$, and denote by $d$ the \gls{vc} dimension of $\HH$. For any
\begin{align}
m \geq \frac{5(\rho + \eps)}{\eps^2} \Big( \ln\frac{4}{\delta} + d \ln\frac{40 (\rho +\eps)}{\eps^2}\Big) \label{eqn:bound_m}
\end{align}
we have that
\begin{align}
    \Prb^m \{&(x_1,\ldots,x_m) \in X^m:~ \exists h \in \HH \text{ such that } \nonumber \\ &\erhat{f}{h} \leq \rho \text{ and }\er{f}{h} > \rho + \eps\} \leq \delta. \label{eqn:thm:1}
\end{align}
\end{thm}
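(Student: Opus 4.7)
The plan is to follow the classical VC-theoretic uniform convergence argument. First, I would reduce the claim to a statement about a single composite function class. For the fixed $f \in \HH$, define $g_h(x) = |f(x) - h(x)|$ for each $h \in \HH$ and let $\HH_f = \{g_h : h \in \HH\}$. Then $\mathbb{E}[g_h(x)] = \er{f}{h}$ while the empirical mean on a multisample is $\erhat{f}{h}$, and the VC-type complexity of $\HH_f$ is controlled by $d$, since $g_h$ is a fixed relabeling of $h$. Thus the event in \eqref{eqn:thm:1} is exactly a one-sided relative uniform deviation over $\HH_f$.

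Next, I would apply the standard double-sample (symmetrization) trick. Introduce an independent ghost multisample $(x_1', \ldots, x_m')$ and consider the event $B' = \{\exists h \in \HH : \erhat{f}{h} \leq \rho \text{ and the empirical disagreement of } h \text{ on the ghost sample} \geq \rho + \eps/2\}$. A short Chebyshev/Hoeffding computation, using that $m$ is large enough that the ghost empirical mean concentrates around $\er{f}{h}$ with probability at least $1/2$ whenever $\er{f}{h} > \rho + \eps$, gives $\Prb^m(B_m) \leq 2\,\Prb^{2m}(B')$, where $B_m$ denotes the event inside \eqref{eqn:thm:1}. Then I would condition on the $2m$-point pool and apply a random permutation argument: for any fixed hypothesis $h$ realized on the pool, the probability over a uniformly random split into two halves that one side has empirical mean at most $\rho$ while the other exceeds $\rho + \eps/2$ can be bounded via a Hoeffding-type inequality (Proposition \ref{prop:hoeffding}), yielding something on the order of $\exp(-c\, m\eps^2/(\rho+\eps))$. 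The $(\rho+\eps)$ in the denominator, rather than a constant, is what ultimately produces the $(\rho+\eps)/\eps^2$ factor in the sample bound, and is obtained either by a direct variance-aware Bernstein-style estimate or by bounding the relative deviation $(\erhat{f}{h}_{\text{ghost}} - \erhat{f}{h})/\sqrt{\rho+\eps}$.

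Then I would take a union bound over hypotheses. By the Sauer--Shelah lemma, the number of distinct labelings of the $2m$ pool points induced by $\HH$ (equivalently by $\HH_f$) is at most $(2em/d)^d$. Combining the per-hypothesis bound with this growth-function estimate and the factor of $2$ from symmetrization yields an overall bound of the form
\begin{equation*}
\Prb^m(B_m) \;\leq\; 2\,(2em/d)^d \exp\!\Big(\!-\tfrac{c\, m\, \eps^2}{\rho+\eps}\Big).
\end{equation*}
Setting this quantity equal to $\delta$ and solving for $m$ gives, after standard manipulation (using $\ln m \leq \alpha m$-type inequalities to separate $m$ from $\ln m$), the closed-form sufficient condition stated in \eqref{eqn:bound_m}.

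The main obstacle will be two-fold. First, obtaining the correct relative scaling $(\rho+\eps)/\eps^2$ rather than the naive $1/\eps^2$ requires the sharper concentration step in the symmetrization argument; a plain Hoeffding bound applied to $|\erhat{f}{h} - \er{f}{h}|$ would give a weaker exponent and hence a worse sample complexity. Second, inverting the transcendental inequality to pin down the precise constants $5$, $40$, and $4$ in \eqref{eqn:bound_m} is a routine but delicate bookkeeping exercise — as with most PAC sample-complexity bounds, the constants depend on how tightly one chooses to relax the step $\ln(2em/d) \leq \alpha \cdot \text{(polynomial)}$ and how one splits the target $\delta$ between the symmetrization and the union-bound steps. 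Since the statement is explicitly taken from \cite{Alamo2009}, the cleanest route is to invoke that reference directly rather than redoing the constant-tracking.
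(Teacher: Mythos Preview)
The paper does not prove Theorem~\ref{thm:1} at all: it is stated verbatim as \cite[Theorem 7]{Alamo2009} adapted to the paper's notation, with no accompanying argument. Your proposal correctly recognizes this at the end and lands on the same resolution --- invoke \cite{Alamo2009} directly --- so in that sense you match the paper exactly.

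The VC-theoretic sketch you give beforehand (symmetrization via a ghost sample, Sauer--Shelah, variance-aware concentration to get the $(\rho+\eps)/\eps^2$ scaling, then inverting for $m$) is a sound outline of how such bounds are obtained and is more than the paper itself provides. It is not needed here, but it is not wrong either; treat it as background rather than as the proof the paper expects.
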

In words, Theorem \ref{thm:1} states 
that the probability that there exists a hypothesis such that its empirical error $\erhat{f}{h}$ with the target function is at most $\rho$ but the actual error
$\er{f}{h}$ is higher than $\rho+\epsilon$, is at most equal to $\delta$ (which is typically selected to be small).
Note that unlike the tracking problems presented in \cite{Helmbold1994}, we consider two levels of probability rather than an expected value assessment.

For the subsequent developments we consider target functions that exhibit the following structure on the way the labeling is changing, i.e., the target is moving. 
\begin{assumption}
\label{assum:mu}
Let $f_1,\ldots,f_m,f_{m+1} \in \HH$, and consider $\underline{\mu}, \overline{\mu} \in (0,1)$ with $\underline{\mu} \leq \overline{\mu}$. We assume that the average probability of disagreement of the previous labels with the label $f_{m+1}$, denoted by 
\begin{equation} \label{eq:mu}
    \mu = \frac{1}{m}\sum_{i=1}^m \er{f_i}{f_{m+1}},
\end{equation}is bounded such that $\underline{\mu} \leq \mu \leq \overline{\mu}$.
\end{assumption}
Assumption \ref{assum:mu} implies that the target sets are changing but we impose a restriction (both upper and lower limits) on the probability that the labeling they produce changes. We refer to $\underline{\mu}, \overline{\mu}$ as the minimum and maximum, respectively, target change.

\section{Finite sample probabilistic certificates}\label{sec:3:FiniteSampleProbabilistic}

\subsection{Main result} \label{sec:apriori}
Problem \ref{problem} requires obtaining finite sample complexity bounds such that a hypothesis $h_m$ constructed on the basis of a labeled $m$-multisample tracks (probabilistically) the moving target function. In this section, we show that this is the case for hypotheses in minimal empirical disagreement on the $m$-multisample. We formalize the set of such hypotheses in the definition below.

\begin{definition}[minimal disagreement]\label{def:mindis}
Consider a labeled $m$-multisample $\{(x_1,f_1(x_1)),\ldots, (x_m,f_m(x_m)) \}$. We refer to the set 
\begin{equation}\label{eqn:mindis}
    M_m \coloneqq \argmin_{h \in \HH} \frac{1}{m} \sum_{i=1}^m | f_i(x_i) -  h(x_i)|
\end{equation}
as the set of hypotheses in $\HH$ that minimize the empirical error with the labeled $m$-multisample. We then say that any $h \in M_m$ is in minimal disagreement with $f_1,\ldots,f_m$.
\end{definition}

\sREV As we consider the target function and hypothesis to be a $\{0,1\}$-valued labeling functions and $\mathcal{H}$ is assumed to have finite VC dimension (Assumption \ref{assum:H}), by Sauer’s Lemma~\cite[pg.~124]{Vidyasagar2003}, the number of distinct labelings of $N$ samples is at most $\sum_{i=0}^d \binom{N}{i}$, where $d$ denotes the VC dimension. Since we are minimizing a discrete function over a finite set of labelings in $\{0,1\}^N$, a minimum must be attained, and hence $M_m$ is non-empty. \eREV

Let $h_m$ be a hypothesis in minimal disagreement with $f_1,\ldots,f_m$. We show that for this particular hypothesis choice, we can provide an answer to Problem \ref{problem}, with $\eps_0 = 4\overline{\mu}$. We formalize this in the next theorem, which is the main result of this section.

\begin{thm}
    \label{thm:main}
    Fix $\eps, \delta \in (0,1)$. 
    Denote by $d$ the \gls{vc} dimension of $\HH$, and consider Assumption~\ref{assum:mu} with $\overline{\mu} < \frac{1}{4}$.
    If we choose $m \geq m_0(\eps,\delta)$, where
    \begin{align}
            m_0(\eps,\delta) = &\max\Big \{ \frac{1}{2\underline{\mu}^2} \ln{\frac{2}{\delta}}, \nonumber \\
            & \frac{5 (4\overline{\mu}+\eps)}{\eps^2} \Big( \ln\frac{8}{\delta} + d \ln\frac{40 (4\overline{\mu}+\eps)}{\eps^2}\Big) \Big \}, \label{eqn:m0}
    \end{align}
    we then have that for any $h_m \in M_m$,
    \begin{align}
        \mathbb{P}^m\{(x_1,\ldots,x_m) &\in \X^m:~ \nonumber\\
        &\er{f_{m+1}}{h_m} \leq 4\overline{\mu}+\eps \} \geq 1-\delta. \label{eqn:thm2}
    \end{align}
\end{thm}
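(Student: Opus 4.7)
The plan is to combine the PAC-type bound of Theorem~\ref{thm:1}, applied with respect to the final target $f_{m+1}$, with a concentration argument for the empirical disagreement between the observed labels $\{f_i(x_i)\}_{i=1}^m$ and the labels that $f_{m+1}$ would assign on the same sample. For each $i$, I would introduce the independent Bernoulli variable $Y_i = |f_i(x_i) - f_{m+1}(x_i)|$ with mean $p_i = \er{f_i}{f_{m+1}}$; independence follows from $x_1,\ldots,x_m$ being i.i.d. Invoking Proposition~\ref{prop:hoeffding} with $\tau = m\underline{\mu}$, the first entry in \eqref{eqn:m0} ensures $e^{-2m\underline{\mu}^2} \leq \delta/2$, so that, combined with Assumption~\ref{assum:mu} and $\underline{\mu} \leq \overline{\mu}$, with probability at least $1-\delta/2$
\begin{align*}
    \frac{1}{m}\sum_{i=1}^m |f_i(x_i) - f_{m+1}(x_i)| \leq \mu + \underline{\mu} \leq 2\overline{\mu}.
\end{align*}

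Next, I would exploit that $h_m \in M_m$ minimizes the empirical disagreement in \eqref{eqn:mindis}: since $f_{m+1} \in \HH$ (Assumption~\ref{assum:H}) is a feasible competitor in the minimization,
\begin{align*}
    \frac{1}{m}\sum_{i=1}^m |f_i(x_i) - h_m(x_i)| \leq \frac{1}{m}\sum_{i=1}^m |f_i(x_i) - f_{m+1}(x_i)|.
\end{align*}
Applying the pointwise triangle inequality $|f_{m+1}(x_i) - h_m(x_i)| \leq |f_{m+1}(x_i) - f_i(x_i)| + |f_i(x_i) - h_m(x_i)|$, averaging over $i$, and chaining with the bound from the first step yields $\erhat{f_{m+1}}{h_m} \leq 2\cdot 2\overline{\mu} = 4\overline{\mu}$ on the same high-probability event.

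Finally, I would apply Theorem~\ref{thm:1} with $f = f_{m+1}$, $\rho = 4\overline{\mu}$ (the assumption $\overline{\mu} < 1/4$ ensures $\rho \in [0,1)$), and confidence level $\delta/2$; the second entry in \eqref{eqn:m0} is exactly \eqref{eqn:bound_m} under this substitution. This produces, with probability at least $1-\delta/2$, that \emph{every} $h \in \HH$ with $\erhat{f_{m+1}}{h} \leq 4\overline{\mu}$ also satisfies $\er{f_{m+1}}{h} \leq 4\overline{\mu}+\eps$; in particular this conclusion applies to $h_m$ on the event constructed in the previous step. A union bound over the two failure events, each of probability at most $\delta/2$, delivers~\eqref{eqn:thm2}.

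The subtlety that I expect to require most care is that $h_m$ is itself data-dependent, so controlling the population disagreement $\er{f_{m+1}}{h_m}$ from the empirical quantity $\erhat{f_{m+1}}{h_m}$ requires the \emph{uniform} deviation statement furnished by Theorem~\ref{thm:1} (quantified over all $h \in \HH$) rather than a pointwise concentration inequality applied to $h_m$ alone. The triangle-inequality step, although elementary, is precisely what decouples the contribution of the drift in $f_1,\ldots,f_m$ (controlled via Hoeffding) from the hypothesis's empirical optimality (controlled via its defining minimization), and is the reason the additive penalty $\eps_0$ in Problem~\ref{problem} takes the specific value $4\overline{\mu}$ rather than e.g.\ $2\overline{\mu}$.
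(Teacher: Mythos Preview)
Your proposal is correct and follows essentially the same route as the paper: decompose via a Hoeffding event controlling $\frac{1}{m}\sum_i |f_i(x_i)-f_{m+1}(x_i)|$, use optimality of $h_m$ together with the triangle inequality to obtain $\erhat{f_{m+1}}{h_m}\le 4\overline{\mu}$, and then invoke Theorem~\ref{thm:1} with $\rho=4\overline{\mu}$ and confidence $\delta/2$; the union bound concludes. The only cosmetic difference is that the paper takes $\tau=m\mu$ in Proposition~\ref{prop:hoeffding} and then uses $\mu\ge\underline{\mu}$ to bound $e^{-2m\mu^2}\le e^{-2m\underline{\mu}^2}$, whereas you take $\tau=m\underline{\mu}$ directly; both yield the same sample-size requirement and the same high-probability bound $\frac{1}{m}\sum_i |f_i(x_i)-f_{m+1}(x_i)|\le 2\overline{\mu}$.
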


\begin{proof}
Fix any $\eps, \delta \in (0,1)$.
We define the following events:
\begin{align}
&E = \{(x_1,\ldots,x_m) \in \X^m :~ \er{f_{m+1}}{h_m} > 4\overline{\mu}+\eps\}, \nonumber \\
&A = \{ (x_1,\ldots,x_m) \in \X^m :~ \nonumber \\
&\hspace{2.5cm}\frac{1}{m}\sum_{i=1}^m |f_i(x_i)- f_{m+1}(x_i)| > 2\mu \}.
\end{align}
$A$ is an approximation set as it includes the $m$-multisamples for which the empirical average disagreement $\frac{1}{m}\sum_{i=1}^m |f_i(x_i)- f_{m+1}(x_i)|$ is at least twice as big as the actual average disagreement $\mu$. $E$ plays the role of the error set, as by its definition, $\mathbb{P}^m\{E\}\leq \delta$ is the complementary statement to that of \eqref{eqn:thm2}.

We can bound $\Prb^m\{E\}$ as
    \begin{align}
        \Prb^m\{E\} & = \Prb^m\{E \cap A\} + \Prb^m\{E \cap \overline{A}\} \nonumber \\
        & \leq \Prb^m\{A\} + \Prb^m\{E \cap \overline{A}\},
    \label{eqn:total}
    \end{align}
where $\overline{A}$ denotes the complement of $A$. The inequality is since $\Prb^m\{E \cap A\} \leq \Prb^m\{A\}$.
To show \eqref{eqn:thm2}, we can equivalently establish that  $\mathbb{P}^m\{E\} \leq \delta$. To achieve this, it suffices to show that $\Prb^m\{A\} \leq \delta/2$ and $\Prb^m\{E \cap \overline{A}\} \leq \delta/2$\footnote{Splitting the confidence equally between these two terms is not necessary; further optimizing the split would have minor effect on the final sample size bound as the confidence appears inside the logarithm. As such we do not pursue this here to simplify the analysis.}.

\emph{Case $\Prb^m\{A\} \leq \delta/2$:} For each $i=1,\ldots,m$, set $Y_i = |f_i(x_i)- f_{m+1}(x_i)|$ and $p_i = \er{f_i}{f_{m+1}}$ so that $\Prb\{Y_i = 1\} = p_i$ and $\Prb\{Y_i = 0\} = 1-p_i$. Notice that $Y_1,\ldots, Y_m$ are independent Bernoulli random variables, and by \eqref{eq:mu}, $\sum_{i=1}^m p_i = m\mu$. Under this variables assignment, and selecting $\tau = m\mu$, $\mathbb{P}^m\{A\}$ coincides with the left-hand side of \eqref{eqn:hoeffding}. We then have that
\begin{align}
\mathbb{P}^m\{A\} \leq e^{-2 m \mu^2} \leq e^{-2 m \underline{\mu}^2}, \label{eqn:bug}
\end{align}
where the first inequality is due to Proposition \ref{prop:hoeffding}, and the second one is since $\mu \geq \underline{\mu}$ by Assumption \ref{assum:mu}.

By inspection of \eqref{eqn:bug}, to ensure that $\Prb^m\{A\} \leq \delta/2$, it suffices to show that $e^{-2 m \underline{\mu}^2} \leq \delta/2$. By taking the logarithm making $m$ the argument, we conclude that if 
\begin{align}
m \geq \frac{1}{2\underline{\mu}^2} \ln{\frac{2}{\delta}} ~\Longrightarrow ~ \Prb^m\{A\} \leq \frac{\delta}{2}. \label{eqn:bound1}
\end{align}

\emph{Case $\Prb^m\{E \cap \overline{A}\} \leq \delta/2$:}  
We have that
    \begin{align}
        &\Prb^m\{E \cap \overline{A}\} \nonumber \\
        &\leq \Prb^m\{(x_1,\ldots,x_m)  \in \X^m:~
        \er{f_{m+1}}{h_m} > 4\overline{\mu}+\eps \nonumber \\
        & \hspace{1.5cm}\text{and } \frac{1}{m} \sum_{i=1}^m |f_i(x_i)- f_{m+1}(x_i)| \leq 2\overline{\mu}\}, \label{eqn:event}
    \end{align}
where the first statement in the right-hand side of \eqref{eqn:event} is the event $E$, and the second one encompasses $\overline{A}$. To see the latter, notice that $\overline{A}$ requires $\frac{1}{m}\sum_{i=1}^m |f_i(x_i)- f_{m+1}(x_i)| \leq 2\mu$, and $\mu \leq \overline{\mu}$ due to Assumption \ref{assum:mu}. 

We have assumed that for any $m$-multisample, $h_m$ is chosen from $M_m$. By \eqref{eqn:mindis}, since $h_m \in M_m$, $\sum_{i=1}^m |f_i(x_i)- h_m(x_i)| \leq \sum_{i=1}^m |f_i(x_i)- h(x_i)|$ for any $h\in \HH$. However, since we also have that $f_{m+1} \in \HH$, we have that for any $m$-multisample,
    \begin{equation}
    \sum_{i=1}^m |f_i(x_i)- h_m(x_i)| \leq \sum_{i=1}^m |f_i(x_i)- f_{m+1}(x_i)|.  \label{eqn:f_h}
    \end{equation}
Moreover, we have that
\begin{align}
&\erhat{f_{m+1}}{h_m} = \frac{1}{m} \sum_{i=1}^m |f_{m+1}(x_i)- h_m(x_i)| \nonumber \\
&\leq \frac{1}{m} \sum_{i=1}^m |f_i(x_i)- f_{m+1}(x_i)| +  \frac{1}{m} \sum_{i=1}^m |f_i(x_i)- h_m(x_i)|\nonumber \\
&\leq \frac{2}{m} \sum_{i=1}^m |f_i(x_i)- f_{m+1}(x_i)|,
\label{eqn:triangle}
\end{align}
where the equality is due to \eqref{eqn:erhat}, and the first inequality is by adding and subtracting $f_i(x_i)$ in each term in the summation and applying the triangle inequality. The last inequality is due to \eqref{eqn:f_h}. 

Since \eqref{eqn:triangle}
holds for any $m$-multisample, we have that
\begin{align}
\{&(x_1,\ldots,x_m) \in \X^m: \frac{1}{m} \sum_{i=1}^m |f_i(x_i)- f_{m+1}(x_i)| \leq 2\overline{\mu}\} \nonumber \\
&\subseteq \{(x_1,\ldots,x_m) \in \X^m:  \erhat{f_{m+1}}{h_m} \leq 4\overline{\mu}\}. \label{eqn:incl}
\end{align}
As a result, by \eqref{eqn:event} and \eqref{eqn:incl} we obtain
    \begin{align}
        &\Prb^m\{E \cap \overline{A}\} \nonumber \\
        &\leq \Prb^m\{(x_1,\ldots,x_m)  \in \X^m:~
        \er{f_{m+1}}{h_m} > 4\overline{\mu}+\eps \nonumber \\
        & \hspace{1.5cm}\text{and } \erhat{f_{m+1}}{h_m} \leq 4 \overline{\mu}\}.\label{eqn:event1}
    \end{align}

Notice that \eqref{eqn:event1} takes the form of \eqref{eqn:thm:1}, with $f_{m+1}$, $h_m$ and $4\overline{\mu}$ in place of $f$, $h$ and $\rho$, respectively. Theorem \ref{thm:1} with $\delta/2$ in place of $\delta$ implies that 
\begin{align}
m &\geq \frac{5 (4\overline{\mu}+\eps)}{\eps^2} \Big( \ln\frac{8}{\delta} + d \ln\frac{40 (4\overline{\mu}+\eps)}{\eps^2}\Big) \nonumber \\
&
\Longrightarrow~ \Prb^m\{E \cap \overline{A}\} \leq \frac{\delta}{2}. \label{eqn:bound2} 
\end{align}

By \eqref{eqn:bound1} and \eqref{eqn:bound2}, we obtain that if $m \geq m_0(\eps,\delta)$, where $m_0(\eps,\delta)$ is as in \eqref{eqn:m0}, we have that $\Prb^m\{E\} \leq \delta$, thus concluding the proof. 
\end{proof}
The proof of Theorem~\ref{thm:main} is inspired by \cite[Theorem 1]{Helmbold1994}. However, the result therein does not involve two layers of probability and effectively provides a bound on the expectation of the probability of incorrectly tracking the target. Moreover, only an upper bound on the target change is considered in \cite{Helmbold1994}. This is due to the fact that a term similar to $e^{-2 m\mu^2}$ was bounded by $e^{-2m \overline{\mu}^2}$, which is, however, not valid as $\mu \leq \overline{\mu}$. Here we correct this issue by introducing a lower bound on the target change, resulting in equation \eqref{eqn:bug}.

The sample size bound in \eqref{eqn:m0} depends polynomially on $1/\eps$ and logarithmically on $\delta$. This implies that we could make the confidence $1-\delta$ high without an unaffordable increase on the number of samples required. 
Figure \ref{fig:mu_bnds} illustrates the number of samples as a function of $\eps$. The color code corresponds to different values of $\underline{\mu}, \overline{\mu}$.
It should be noted that the overall accuracy level for the prediction properties of our hypothesis is $4\overline{\mu} + \eps$. Even though $\eps$ is user-chosen, $\overline{\mu}$ is a property of the target, and as such the labeling mechanism is considered to be \gls{pac} learnable to accuracy $4\overline{\mu}$. Therefore, insightful accuracy levels can be achieved if $\overline{\mu}$ is relatively low, i.e., for moderately changing target functions.

\begin{remark}[Effect of $\underline{\mu}, \overline{\mu}$]\label{rem:mu}
As evident from \eqref{eqn:m0}, the minimum number of samples that need to be generated is the maximum of two terms: the first one depends only on the minimum target change $\underline{\mu}$, while the second one depends on the maximum target change $\overline{\mu}$. These two sample size bounds that comprise $m_0(\eps,\delta)$ emanate from bounding  
\begin{enumerate}
    \item the event $A$, that the empirical average disagreement $\frac{1}{m}\sum_{i=1}^m |f_i(x_i)- f_{m+1}(x_i)|$ is at least twice as big as the actual average disagreement $\mu$. Bounding this term is responsible for the first sample size bound in \eqref{eqn:m0}.
    \item the event $E \cap \overline{A}$, that the empirical average disagreement is less than twice as the actual average disagreement $\mu$, yet that the true probability of disagreement between the hypothesis, $h_m$, and the subsequent label, $f_{m+1}$, is more than $4\overline{\mu}+\eps$. Bounding this term is responsible for the second sample size bound in \eqref{eqn:m0}.
\end{enumerate}  
With reference to Figure \ref{fig:mu_bnds}, for high values of $\epsilon$ the first sample size bound in \eqref{eqn:m0} dominates, which is independent of $\epsilon$, hence that part of each curve is constant. On the contrary, for lower values of $\epsilon$ the second sample size bound in \eqref{eqn:m0} becomes the dominant one.

If $\underline{\mu}$ is sufficiently low (the target could move slowly), then the first sample size bound in \eqref{eqn:m0} dominates. Intuitively, this implies that if the target could move slowly, then learning the actual probability of change from the empirical one (this is encoded in the definition of the event $A$) requires more samples, as with few samples we might get misleading results due to observing a faster target change than the true average change in the target. With reference to Figure \ref{fig:mu_bnds}, the minimum number of samples required increases as $\underline{\mu}$ decreases (compare the constant part of the curves). 

If we now allow for a large change of the target, encoded by a large $\overline{\mu}$, then the second sample size bound in \eqref{eqn:m0} dominates. This implies that 
we need a sufficiently high number of samples to, with high confidence, bound the event that the true change with respect to the subsequent label, $f_{m+1}$, is not considerably lower than the observed, empirical change (encoded by event $E \cap \overline{A}$). 
Intuitively, if the target is moving fast, then incorrectly predicting the label of a new sample if the empirical error is low (event $E \cap \overline{A}$) requires more samples. This is since with fewer samples we may get into a situation with a low empirical error, however, due to the target changing fast the error when it comes into predicting the label of yet another sample may be significantly higher.
With reference to Figure \ref{fig:mu_bnds}, for any fixed $\eps$, the minimum number of samples required increases as $\overline{\mu}$ increases (compare the non-constant part of the curves).

To account for both cases and make sure that the probability of both events $A$ and $E \cap \overline{A}$ is sufficiently low, we take the maximum of the associated sample size bounds. 
\end{remark}

\begin{figure}[t!]
 	\centering
	\includegraphics[width=0.5\textwidth]{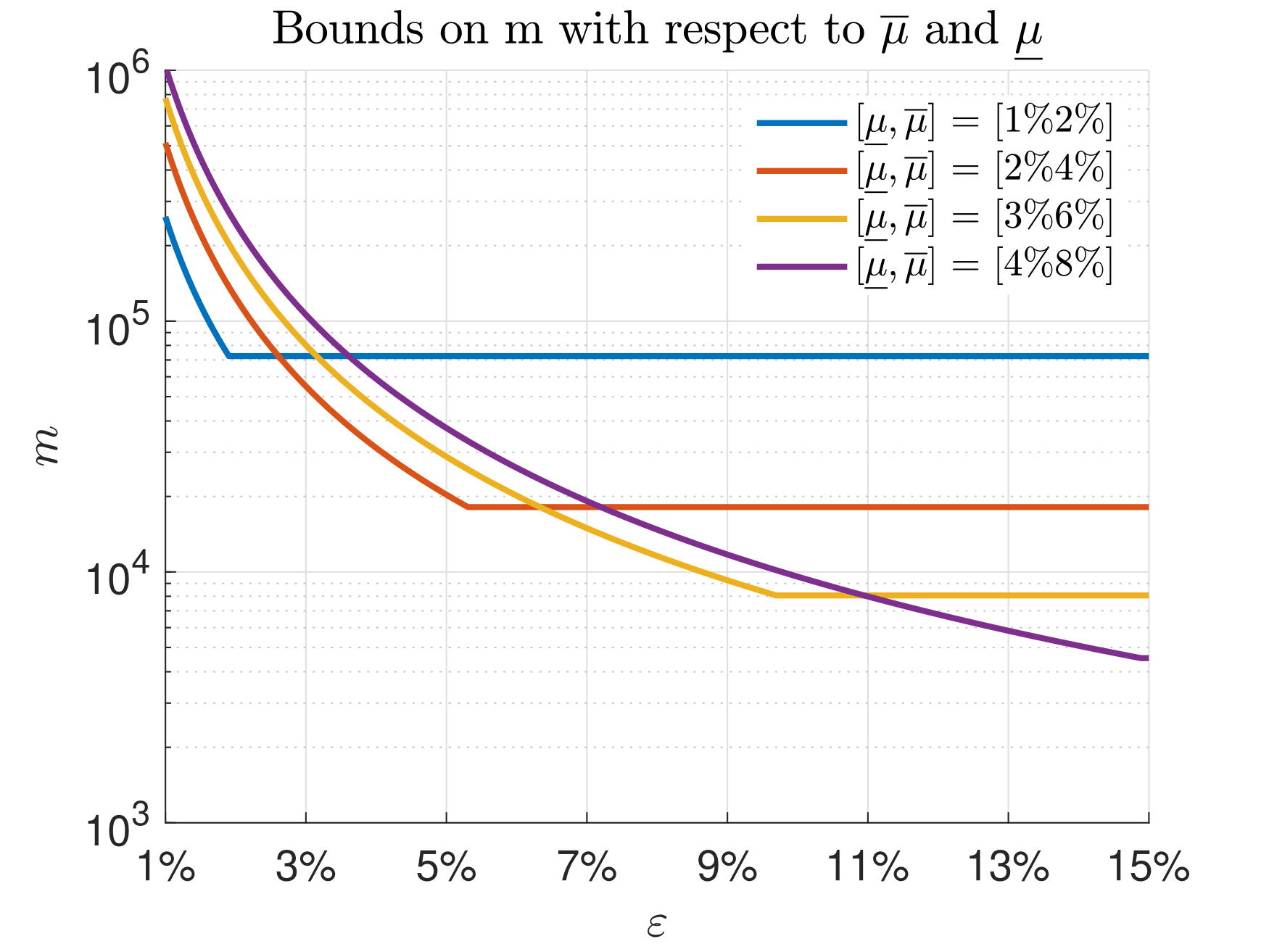}
	\caption{Number of samples required according to \eqref{eqn:m0} for different accuracy levels $\eps$ and $\delta = 10^{-6}$ with \gls{vc} dimension 4. The color code corresponds to different values of $\underline{\mu}, \overline{\mu}$. Notice that the term dependent on $\underline{\mu}$ in \eqref{eqn:m0} does not depend on $\epsilon$ and thus constitutes the constant dominant at higher levels of $\eps$.}
 \label{fig:mu_bnds}
\end{figure}

\begin{remark}[Constant target]\label{rem:constant}
The case of a constant target can be obtained as a direct byproduct of the proof of Theorem~\ref{thm:main}. To see this, notice that a constant target implies that $\underline{\mu} = \overline{\mu} = 0$, i.e., if all target functions are the same, their mutual error is zero. As such, $f_i(x_i)$ and $ f_{m+1}(x_i)$ will always be in agreement. We present the proof of Theorem~\ref{thm:main} under a constant target assumption in the Appendix. As a result, the sample size bound is identical to that of Theorem \ref{thm:1} with $\rho = 0$. This implies, that when it comes to providing guarantees for the minimal disagreement hypothesis and for the case where the target is constant, Theorem \ref{thm:main} specializes to the result of Theorem \ref{thm:1} with $\rho = 0$. With reference to Problem \ref{problem}, notice also that in this case, $\eps_0 = 4\overline{\mu} = 0$.
\end{remark}

\section{Hypothesis computation}\label{sec:4:hypothesisComputation}
We now consider the construction of the hypothesis $h_m$ that minimizes the empirical error with respect to the labeled $m$-multisample, i.e., $h_m \in M_m$. For the remainder of the paper, we will assume that the domain $\X$ is compact. Furthermore, we consider the labelling functions $f_i$, $i=1,\ldots,m$, to be defined as 
\begin{equation} \label{eqn:label_fcn}
    f_i(x) = {\1}_{B_i}(x) = \begin{cases} 1 & \text{if} \: x \in B_i \\ 0 & \text{otherwise} \end{cases},
\end{equation}
with the sets $B_i$, $i=1,\ldots,m$, being non-empty convex polytopes in $\mathbb{R}^n$, each of them having at most $n_f$ facets. 
As the hypothesis belongs to the same class with the target functions, we seek to find a convex polytope, denoted by $B_{h_m}$, such that the hypothesis $h_m$ defined as 
\begin{equation} \label{eqn:h_m}
    h_m(x) = {\1}_{B_{h_m}}(x) = \begin{cases} 1 & \text{if} \: x \in B_{h_m} \\ 0 & \text{otherwise} \end{cases},
\end{equation}
is in minimal disagreement with the observed labels. Since $B_{h_m}$ is a convex polytope with at most $n_f$ facets we represent it by means of $n_f$ linear inequality constraints as $Ax + b \leq 0$, where $A \in \mathbb{R}^{n_f \times n}$ and $b \in \mathbb{R}^{n_f}$. \sREV Denote each row-vector of $A$ (respectively, $b$) by $a_j$ ($b_j$), $j=1,\ldots, n_f$. For each $j=1,\ldots, n_f$, $a_j x +b_j=0$ denotes then a facet of $B_{h_m}$. \eREV We make this parameterization explicit by denoting the convex polytope as $B_{h_m}(A,b)$. Moreover, we assume that for each $j=1,\ldots,n_f$, $(a_j^\top, b_j) \in C_j \subset \mathbb{R}^{n_f+1}$, where $C_j$ is some arbitrarily large compact set that contains the origin in its interior. The purpose of the set will become clear in the sequel.

We show how to construct a \acrfull{milp} that returns the parameterization of $B_{h_m}$, namely $A$ and $b$, which results in a hypothesis $h_m \in M_m$. To this end, let
$I_1$ and $I_0$ be the set of sample indices for which the label is $1$ and $0$, respectively, i.e.,
\begin{align}
    I_1 & = \set{\text{$i\in \{1,\ldots,m\}$ such that } f_i(x_i) = 1}, \\
    I_0 & = \set{\text{$i\in \{1,\ldots,m\}$ such that } f_i(x_i) = 0}.
\end{align}
We instantiate the MILP that returns the minimal disagreement hypothesis in the following main steps: 

\emph{1. Disagreement with the sample indices in $I_1$.} Fix any $i \in I_1$, and let $x_i$ be the associated sample. Fix also a parameterization $A,b$ of $B_{h_m}$. If $h_m(x_i) = f_i(x_i) = 1$, i.e., the label that a hypothesis, constructed on the basis of $B_{h_m}(A,b)$, provides on $x_i$ agrees with that of $f_i$, then $x_i \in B_{h_m}(A,b)$ since $i \in I_1$. We thus have that
\begin{align}
x_i \in &B_{h_m}(A,b) \nonumber \\
&\iff a_j x_i + b_j \leq 0,~ \forall j=1,\ldots,n_f. \label{eqn:label1}
\end{align}
However, we are seeking a hypothesis that is in minimal disagreement with the samples, rather than in zero disagreement. As such, we want to allow for a certain number of incorrect labels, or equivalently, we want to allow violating the right-hand side of \eqref{eqn:label1}. Therefore, we introduce the slack variables $s_{ij} \geq 0$, $j=1,\ldots,n_f$, $i\in I_1$.
As such, for each $i\in I_1$, we consider the relaxed constraints
\begin{equation}
a_j x_i + b_j \leq s_{ij}, ~\forall j=1,\ldots,n_f. \label{eqn:label1_relax}
\end{equation}
By means of \eqref{eqn:label1} and the definition of $h_m$, enforcing \eqref{eqn:label1_relax}, implies that
\begin{align}\label{eqn:viol_I1}
    \begin{cases} 
        h_m(x_i) \neq f_i(x_i) & \text{if} \:  
                \sum_{j=1}^{n_f} s_{ij} > 0, \\
        h_m(x_i) = f_i(x_i) & \text{otherwise}.
        \end{cases}
\end{align}
In words, if $\sum_{j=1}^{n_f} s_{ij} > 0$ (which is satisfied if at least one $s_{ij}, j=1,\ldots,n_f$, is positive as the slack variables are non-negative) implies that the hypothesis $h_m$ disagrees with the target function $f_i$ on the sample $x_i$. If all slack variables are zero, then $h_m$ agrees with $f_i$ on $x_i$, $i\in I_1$.

\emph{2. Disagreement with the sample indices in $I_0$.} Fix any $i \in I_0$, and let $x_i$ be the associated sample. Fix also a parameterization $A,b$ of $B_{h_m}$. If $h_m(x_i) = f_i(x_i) = 0$, i.e., the hypothesis and the target function $f_i$ agree on $x_i$, then $x_i \notin B_{h_m}(A,b)$. This exclusion can imply that the sample $x_i$ would violate the half-space constraint encoding the facets of $B_{h_m}(A,b)$ for at least one facet. This can be written as a logical constraint; employing the developments of \cite{Bemporad1999, Morari2001}, we equivalently reformulate it to mixed-integer inequalities by introducing the binary variables $z_{ij} \in \{0,1\}$, $j=1,\ldots,n_f$, $i=1,\ldots,m$. 
Let $M_j = \sup_{x \in X, (a_j^\top,b_j) \in C_j} a_j x + b_j$, $m_j = \inf_{x \in X, (a_j,b_j) \in C_j} a_j^\top x + b_j$, $j=1,\ldots,n_f$. Note that these exist and are finite, as $\X$ and $C_j$, $j=1,\ldots,n_f$, are assumed to be compact. We then have that
\begin{align}
    x_i &\notin B_{h_m}(A,b) \nonumber \\
    &\iff \begin{cases}  a_j x_i + b_j \leq M_j(1-z_{ij}), ~\forall j = 1, \ldots, n_f, \\ 
                  a_j x_i + b_j > m_j z_{ij}, ~\forall j = 1, \ldots, n_f,\\
                  \sum_{j=1}^{n_f} z_{ij} \leq n_f-1. \label{eqn:label0}\end{cases}
\end{align}
Notice that if $z_{ij} = 0$, then the first inequality in \eqref{eqn:label0} becomes $a_j x_i + b_j \leq M_j$ (trivially satisfied by the definition of $M_j$), while the second one reduces to $a_j x_i + b_j > 0$. The latter implies then that $x_i \notin B_{h_m}(A,b)$ as it violates the constraint of its $j$-th facet. On the contrary, if $z_{ij} = 1$, then the first inequality in \eqref{eqn:label0} implies that $x_i$ is within the half-space defined by the $j$-th facet of $B_{h_m}(A,b)$ \footnote{Note that if $a_j x_i + b_j = m_j$, for $z_{ij} = 1$, the second inequality in \eqref{eqn:label0} would not be satisfied. This limiting case where $a_j x_i + b_j$ admits its lowest value is not an issue in the numerical implementation (see Remark 4) as a tolerance parameter is introduced to ``implement’’ strict inequalities numerically. Alternatively, we could choose any finite $m_j < \inf_{x \in X, (a_j,b_j) \in C_j} a_j^\top x + b_j$, $j=1,\ldots,n_f$, rather than choosing $m_j$ exactly equal to its lowest admissible value.}. For $x_i$ to be inside $B_{h_m}(A,b)$, i.e.,
$x_i \in B_{h_m}(A,b)$, this has to be the case for all $j=1,\ldots,n_f$, or equivalently $\sum_{j=1}^{n_f} z_{ij} = n_f$. This justifies the last constraint in \eqref{eqn:label0}.

Since we only seek a hypothesis in minimal (rather than in zero) disagreement with the target functions, we relax these constraints by introducing slack variables $s_{ij} \geq 0$, $j=1,\ldots,n_f$, $i\in I_0$. 
As such, for each $i\in I_0$, the associated relaxed constraints are given by
\begin{align}
\begin{cases}  a_j x_i + b_j \leq M_j(1-z_{ij}), ~\forall j = 1, \ldots, n_f, \\ 
                  a_j x_i + b_j > m_j z_{ij} - s_{ij}, ~\forall j = 1, \ldots, n_f,\\
                  \sum_{j=1}^{n_f} z_{ij} \leq n_f-1. \label{eqn:label0_relax}\end{cases}
\end{align}
Notice that we do not need to introduce a slack variable in the first inequality in \eqref{eqn:label0_relax}, as this becomes non-redundant only if $z_{ij} = 1$. In this case, however, satisfying the resulting inequality would already mean disagreeing with the target, so we do not need to relax that condition.
By means of \eqref{eqn:label0} and the definition of $h_m$, enforcing \eqref{eqn:label0_relax} leads to the same disagreement implications as in \eqref{eqn:viol_I1}.

\emph{3. Minimizing disagreements.} In view of constructing the hypothesis that is in minimal disagreement with the target functions, we need to be able to count the number of disagreements. However, if $i\in I_1$ we have a disagreement if $x_i \notin B_{h_m}(A,b)$, while if $i\in I_0$ we have a disagreement if $x_i \in B_{h_m}(A,b)$. By \eqref{eqn:viol_I1} and the discussion below \eqref{eqn:label0_relax}, disagreement happens if $\sum_{j=1}^{n_f} s_{ij} > 0$.
If we introduce the binary variable $v_{i} \in \{0,1\}$, $i=1,\ldots,m$, defined as
\begin{align}\label{eqn:vconstr}
    v_{i} & = \begin{cases} 
        1 & \text{if} \:  
                \sum_{j=1}^{n_f} s_{ij} > 0, \\
        0 & \text{otherwise}\MP{,}
        \end{cases}
\end{align}
then, the total number of disagreements that we seek to minimize is given by $\sum_{i=1}^m v_i$.

For each $j=1,\ldots,n_f$, we have assumed that $(a_j^\top,b_j) \in C_j$, where $C_j$ is compact and contains the origin in its interior. As such, $M_j>0$ and $m_j<0$ for all $j=1,\ldots,n_f$. Therefore, by \eqref{eqn:label1_relax} and the definition of $M_j$, $s_{ij} \leq M_j$, for all $i \in I_1$. Similarly, by \eqref{eqn:label0_relax} and the definition of $m_j$, $s_{ij} < -m_j$, for all $i \in I_0$. Notice that this follows from requiring the right-hand side in the second inequality of \eqref{eqn:label0_relax} to be greater than or equal to the worst-case lower bound of $a_jx_i +b_j$, namely, $m_j$, for the case where $z_{ij}=0$ that this constraint becomes nontrivial.
Summing the across $j=1,\ldots,n_f$, we obtain
\begin{align}\label{eqn:s_bounds}
\begin{cases}
\sum_{j=1}^{n_f} s_{ij} \in [0, \sum_{j=1}^{n_f}  M_j], & \text{if} \: i \in I_1\\
                \sum_{j=1}^{n_f} s_{ij} \in [0, -\sum_{j=1}^{n_f} m_j), & \text{if} \: i \in I_0.      
\end{cases}
\end{align}
The logical implication in \eqref{eqn:vconstr} is then reformulated as 
\begin{align} \label{eqn:cases_dis}
\begin{cases}
\sum_{j=1}^{n_f} s_{ij} - v_{i} \sum_{j=1}^{n_f} M_j \leq 0, &\text{if} \: i \in I_1, \\
\sum_{j=1}^{n_f} s_{ij} +  v_{i} \sum_{j=1}^{n_f} m_j < 0, &\text{if} \: i \in I_0.
\end{cases}
\end{align}
To see the equivalence between \eqref{eqn:cases_dis} and \eqref{eqn:vconstr}, consider the former inequality in \eqref{eqn:cases_dis}. 
Notice that if $\sum_{j=1}^{n_f} s_{ij} > 0$ then this implies that we must have $v_{i} \sum_{j=1}^{n_f} M_j >0$ which, since $M_j>0$ for all $j=1,\ldots,n_f$, implies that $v_i=1$. On the other hand, if $\sum_{j=1}^{n_f} s_{ij} = 0$, then \eqref{eqn:cases_dis} implies that $v_{i} \sum_{j=1}^{n_f} M_j \geq 0$. However, since we are seeking the minimal disagreement hypothesis and hence we will be minimizing $\sum_{i=1}^m v_i$, the minimum value of $v_i$ for which the previous inequality is satisfied is $v_i = 0$. A similar reasoning applies also to the equivalence between the second inequality in  \eqref{eqn:cases_dis} and \eqref{eqn:vconstr}.

\emph{4. Minimal disagreement MILP.}
The \gls{milp} that results in a hypothesis that is in minimal disagreement with respect to the target functions on the $m$-multisample, i.e., $h_m \in M_m$, is given by: 
\begin{align}
     &\minimize_{A, b, \big \{ \{z_{ij}, s_{ij}\}_{j=1}^{n_f} \big \}_{i=1}^m, \{v_{i}\}_{i=1}^m} \: \sum_{i=1}^{m} v_{i} \label{eqn:minDissagreement2} \\
    & \text{subject to } \nonumber \\
    &\forall i\in I_1:~ 
    \begin{cases} \label{eqn:con1}
    & a_j x_i + b_j \leq s_{ij},~\forall j=1,\ldots,n_f,\\
    & \sum_{j=1}^{n_f} s_{ij} - v_{i} \sum_{j=1}^{n_f} M_j \leq 0, 
    \end{cases} \\
    &\forall i\in I_0:~ 
\begin{cases} \label{eqn:con0}
a_j x_i + b_j \leq M_j(1-z_{ij}), ~\forall j = 1, \ldots, n_f, \\ 
                  a_j x_i + b_j > m_j z_{ij} - s_{ij},~\forall j = 1, \ldots, n_f,\\
                  \sum_{j=1}^{n_f} z_{ij} \leq n_f-1, \\
                  \sum_{j=1}^{n_f} s_{ij} +  v_{i} \sum_{j=1}^{n_f} m_j  < 0.
    \end{cases}
\end{align}
The constraints in \eqref{eqn:con1} correspond to \eqref{eqn:label1_relax} and the first inequality in \eqref{eqn:cases_dis}, encoding (relaxed) agreement on the sample with $i \in I_1$, and determining disagreements for this case, respectively. Similarly, the constraints in \eqref{eqn:con0} correspond to \eqref{eqn:label0_relax} and the second inequality in \eqref{eqn:cases_dis}, and admit a similar interpretation. 

The objective function $\sum_{i=1}^m v_i$ involves minimizing the total number of disagreements. We use the volume of the convex polytope parameterized by $A,b$, namely, $\mathrm{vol}(A,b)$, as a tie-break rule to single out a unique solution in case of multiple minimizers. Once the optimal $A, b$ is determined, we can construct $B_{h_m}(A,b)$, and hence $h_m$ by means of \eqref{eqn:h_m}.

\begin{remark}
Note that for the samples indexed by $i \in I_0$, the second inequality in the disagreement constraints in \eqref{eqn:label0} (and hence also \eqref{eqn:label0_relax}) are strict. From a numerical point of view, to implement these constraints we can turn them into non-strict inequalities, where following \cite{Bemporad1999} we introduce a tolerance parameter $\varrho \geq 0$, fixed to the numerical solver precision.

We can then replace the second inequality in \eqref{eqn:label0} by 
\begin{equation*}
    a_j x_i + b_j \geq \varrho + (m_j- \varrho) z_{ij}, ~\forall j=1,\ldots,n_f.
\end{equation*}
Similarly, the second inequality in \eqref{eqn:label0_relax} should be replaced by $a_j x_i + b_j \geq \varrho + (m_j- \varrho) z_{ij} - s_{ij}, ~\forall j=1,\ldots,n_f$. As a result, the second and fourth inequalities in \eqref{eqn:con0} should, respectively, become
\begin{align*}
a_j x_i + b_j &\geq \varrho + (m_j- \varrho) z_{ij} - s_{ij}, ~\forall j=1,\ldots,n_f \\
\sum_{j=1}^{n_f} s_{ij} &+ v_i\sum_{j=1}^{n_f} (m_j - \varrho) \leq 0.
\end{align*}

Once such a $\rho$ parameter is introduced, for samples indexed by $i \in I_0$,
the condition $\sum_{j=1}^{n_f} s_{ij} > 0$ is necessary but not sufficient for the hypothesis to disagree with the target. To see this, notice that if $z_{ij} = 0$ then the second inequality in \eqref{eqn:label0_relax} would become non-redundant, and result in $a_j x_i + b_j \geq \varrho-s_{ij}$. Due to the presence of $\varrho >0$, if $s_{ij}>0$ but $\varrho -s_{ij} > 0$, then $x_i$ may still be outside of a facet of the convex polytope thus agreeing with the target (recall that label agreement here means being outside $B_{h_m}(A,b)$) despite the fact that the associated slack is non-zero. As a result, the MILP in \eqref{eqn:minDissagreement2}-\eqref{eqn:con0} minimizes an upper bound on the total number of disagreements.
\end{remark}

\section{Numerical example} \label{sec:4:RevisistedExample}
\subsection{Problem set-up}
We demonstrate numerically our theoretical developments on a case study that involves \gls{aeb} systems. Furthermore, we consider the computational feasibility of the \gls{milp} and introduce an approach to discard redundant samples, thus reducing the constraints of the \gls{milp}. 

Let us consider a car driving along a road while receiving measurements of the distance $l$ to any vehicle or obstacle ahead, as well as its velocity $v$. If the braking distance at the current velocity exceeds the available distance to the car or obstacle ahead, we want the \gls{aeb} system to engage the brakes autonomously. The necessary braking distance in case of an emergency stop can be calculated by setting the braking force times the distance equal to the kinetic energy of the vehicle. Thus if 
\begin{equation}\label{eqn:kinetic_energy}
    \frac{1}{2} v^2 \frac{\mass}{F} \leq l,
\end{equation}
where $\mass$ is the vehicle mass and $F$ is the braking force, then there is a sufficient distance to the vehicle or obstacle ahead, hence the corresponding measurement is classified as safe. 
In view of \eqref{eqn:kinetic_energy} depending on $v^2$, hereafter we consider $x = (l, v^2)$ as the measurement vector.

The braking force will depend on the friction coefficient of the brakes and will deteriorate over time. Similarly, the vehicle mass will depend on the fuel, passengers, and cargo, which will also change over time. In line with our theoretical developments, we consider $x_i$, $i=1,\ldots,m$, to be independent measurements, with the index $i$ acting as a time-stamp. Let $F_i$ denote the corresponding braking force, which depends on $i$ to reflect the change of the friction coefficient, and let $\mass_i$ denote the vehicle mass, which will also depend on $i$ to reflect changes to the vehicle mass. This dependence of $F_i$ and $\mass_i$ on $i$ induces a different labeling function $f_i$. In particular, 
we label a sample $x = (l, v^2)$ by means of 
\begin{equation} \label{eqn:AEB_Oracle}
    f_i(x)  = \begin{cases} 1 & \text{if} \: \frac{1}{2} v^2 \frac{\mass_i}{F_i}  \leq l \\ 0 &\text{otherwise}.\end{cases}
\end{equation}
For the construction of the hypothesis, we collect a measurement $x_i$ after each engagement of the vehicle's brakes. In addition to obtaining $x_i$, we assume to obtain a brake performance measurement $\frac{\mass_i}{F_i}$ from which to construct the label $f_i$. Furthermore, we assume that we have knowledge of the expected minimum and maximum degradation of the braking performance, allowing us to obtain values for $\underline{\mu}$ and $\overline{\mu}$, respectively.\eREV 

Using numerical values for the braking parameters and vehicle mass as defined in the sequel, the evolution of the braking performance is shown in Figure~\ref{fig:concept_drift}. For visual clarity, we only depict a random subset of the samples.

\sREV For an effective \gls{aeb} system, we want to classify a new sample $x$ as safe or unsafe without having to first engage the brakes to receive a measurement of the new braking performance, which depends on the unknown braking force $F_{m+1}$ and mass $\mass_{m+1}$. We will utilize the results from Section~\ref{sec:3:FiniteSampleProbabilistic} to construct a hypothesis on the basis of a labeled $m$-multisample, which allows us to \emph{a-priori} classify a sample $x$ as safe or unsafe; this illustrates the merit of the proposed method. \eREV

\begin{figure*}[ht]
 	\centering
	\includegraphics[width=\textwidth]{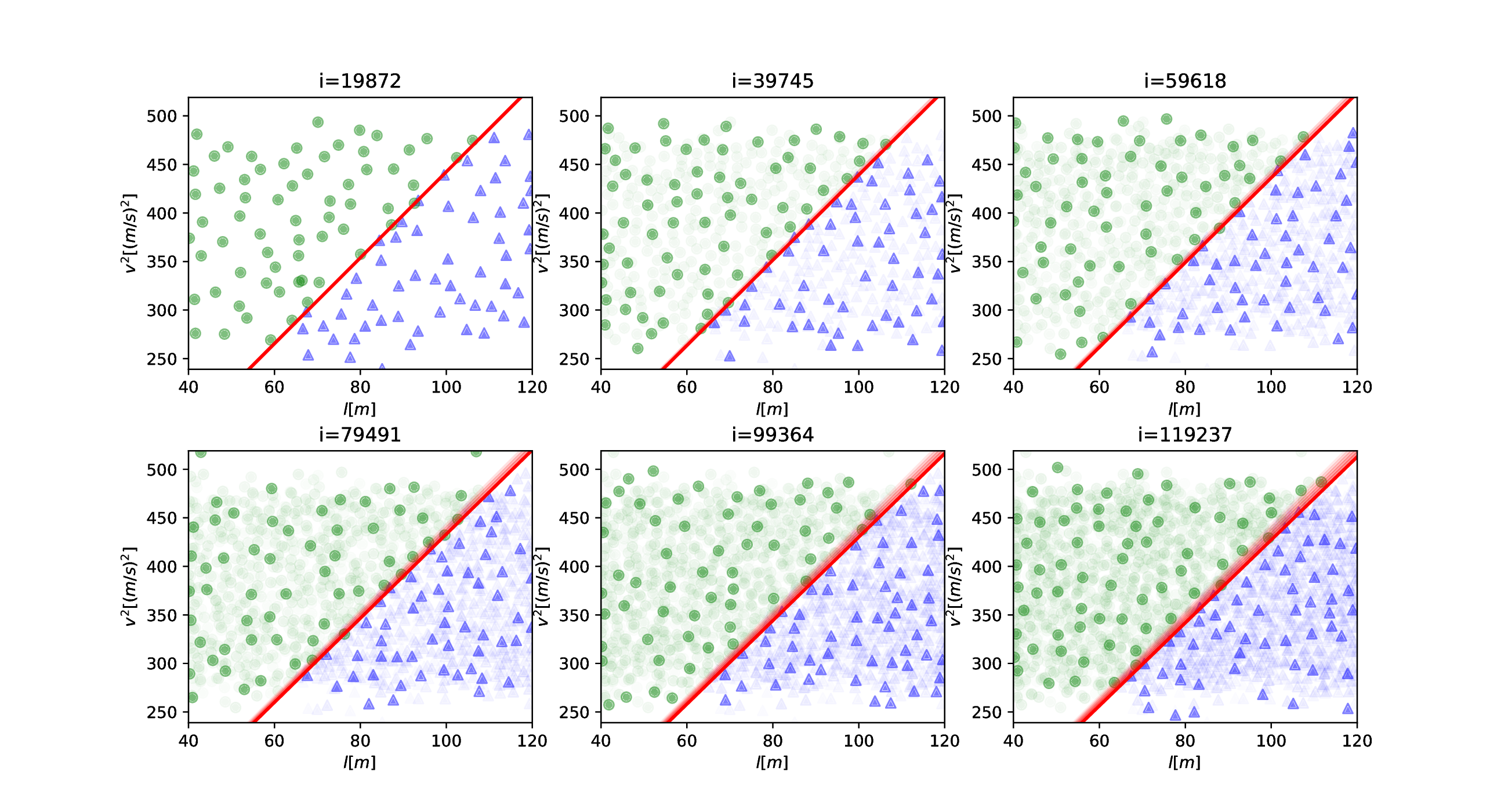}
	\caption{The evolution of the braking performance over time. Green circles indicate samples with label 0, while blue triangles show samples with label 1. The bold red halfplane represents the true safety label at the given iteration, while the opaque halfplanes show the safety boundary at previous iterations. }\label{fig:concept_drift}
\end{figure*}

Aligned with the theoretical developments of Section \ref{sec:4:hypothesisComputation} we consider our hypothesis to be a convex polytope in the 2D plane as illustrated in Figure~\ref{fig:hyperrectangle}. We assume that matrix $A$ parameterizing the convex polytope is fixed, and is given by
\begin{align}
    A = \begin{bmatrix}
         \cos{\theta} &  \sin{\theta}\\
        -\sin{\theta} &  \cos{\theta}\\
        -\cos{\theta} & -\sin{\theta}\\
         \sin{\theta} & -\cos{\theta}
    \end{bmatrix},
\end{align}
where $\theta \coloneqq \tan^{-1}{\frac{\mass}{2 F}}$ denotes the rotation of the convex polytope. Since the safety label is defined by a single half-plane, only one of the facets of the convex polytope becomes relevant, namely $a_3 = [-\cos{\theta} \:  -\sin{\theta}]$. This observation reduces the \gls{vc}-dimension (employed in the sample complexity bounds) to $d=1$. Considering the evolution of the braking performance as shown in Figure~\ref{fig:concept_drift}, the rotation of the convex polytope is minimal. Since the inclusion of the variable rotation introduces a nonlinearity into the \gls{milp}, for the sake of clarity, we will consider the angle $\theta$ to be fixed in the subsequent computation of the hypothesis (however leave the true safety label unchanged).

\begin{figure}[ht]
 \includegraphics[width=1.09\columnwidth]{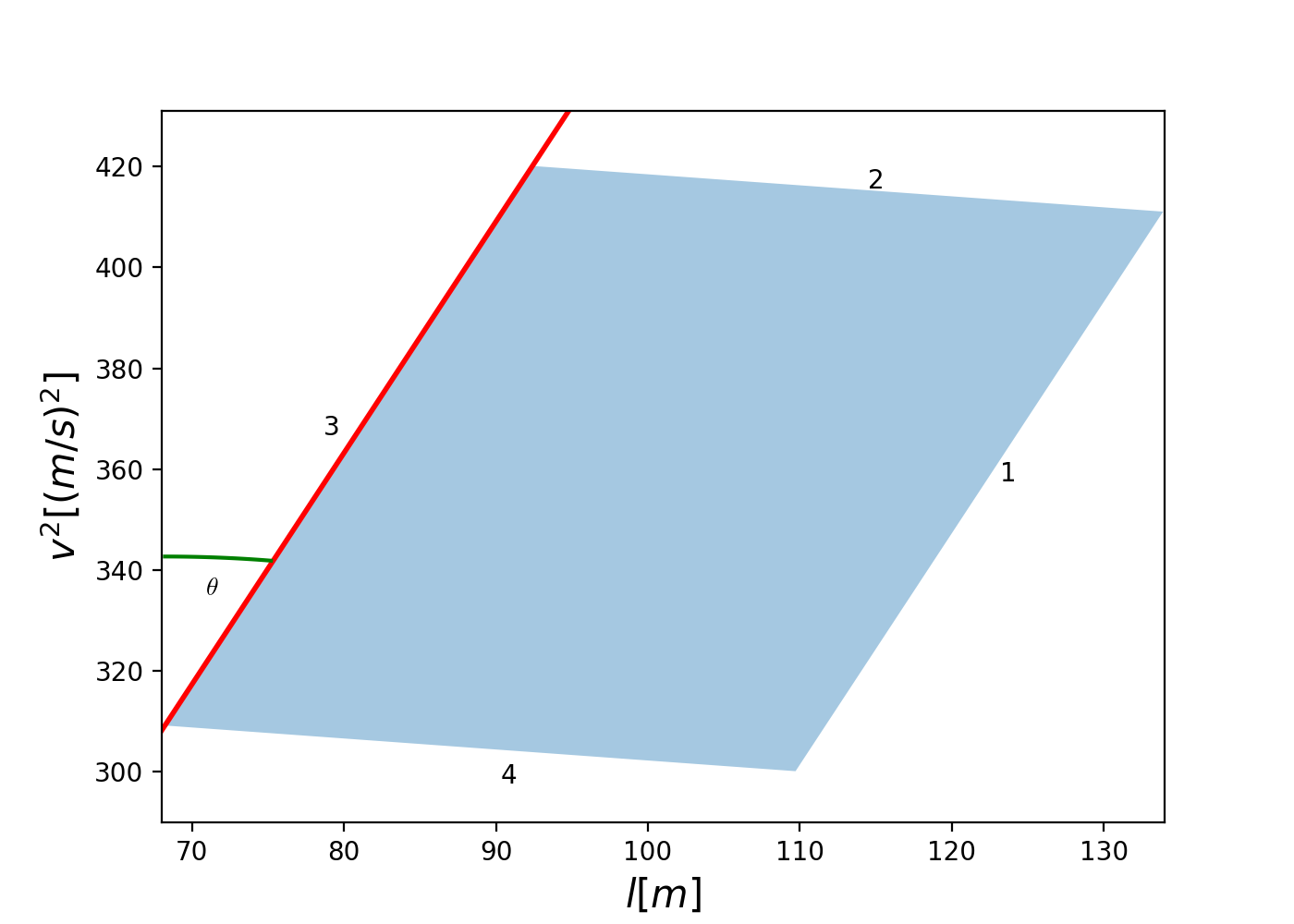}
	\caption{Illustration of the facets of the convex polytope. Since the safety label relies in this case on a single halfplane (drawn in red), we only need to consider facet 3.}\label{fig:hyperrectangle}
\end{figure}

To reduce the size of the \gls{milp} (\ref{eqn:minDissagreement2} -~\ref{eqn:con0}), we will consider how samples can be discarded prior to computing the hypothesis. Recall that the \gls{milp} minimizes the total number of disagreements between the hypothesis and the sample labels. Thus, at best, we can obtain zero disagreement between the hypothesis and the samples in $I_1$. For the \gls{aeb} example, this implies that the halfplane constructed by the hypothesis will need to lie to the left of all samples in $I_1$. This is illustrated in Figure~\ref{fig:discards} by the cyan dotted line. However, any halfplane that lies further to the left of this line, would unnecessarily label samples from $I_0$ with label \emph{one}, increasing the total number of label disagreements. However, the constructed \gls{milp} aims at minimizing the total number of label disagreements; as such the cyan dotted halfplane would always be preferable, rendering any samples in the cyan-colored area redundant. If we have non-zero disagreement with respect to the samples from $I_1$, the halfplane will lie further to the right of the cyan dotted one.
Similarly, to obtain zero disagreement between the hypothesis and the samples in $I_0$, the halfplane constructed by the hypothesis will need to lie to the right of all samples in $I_0$. Following a similar argumentation as before, any sample in the magenta-colored area will not change the solution of the \gls{milp}.

Since we know that the samples in both the blue and the magenta regions will not affect the solution of the \gls{milp}, we can discard these samples prior to computing the hypothesis, resulting in only the red samples in Figure~\ref{fig:discards} being considered.
Following similar arguments, it can be possible to discard redundant samples also in the setting of higher-order convex polytopes. However, generalizing the proposed methodology to achieve this is case-dependent and is not pursued further here.

\subsection{Simulation results} \label{sec:4:part2}
While no knowledge of the distribution of the samples $(l, v^2)$ needs to be known for generating the hypothesis, for simulation purposes we draw $l$ from a uniform distribution over the interval $[40m, 120m]$ and draw $v^2$ from a normal distribution with mean $\overline{v^2} = {(70 km/h)}^2$ and standard deviation $\sigma_{v^2} = {(20 km/h)}^2$. The performance of the brakes at each time step will deteriorate by a factor of $\omega_F$, i.e. $F_{i+1} = \omega_F  F_{i}$, where $\omega_F$ is a random variable drawn from a normal distribution with mean $\mu=(1-3 \cdot 10^{-7})$ and standard deviation $\sigma = 10^{-6}$. The initial car mass is $\mass = 900 kg$ and will randomly change by a factor of $\omega_\mass$, where $\omega_\mass$ is a random variable drawn from a normal distribution with mean $\mu=1$ and standard deviation $\sigma = 10^{-3}$.

For the construction of the hypothesis, the confidence level is chosen as $\delta=10^{-6}$ with an accuracy of $\epsilon= 1\%$. For the satisfaction of Assumption~\ref{assum:mu}, we choose $\frac{1}{m}\sum_{i=1}^{m} \er{f_i}{f_{m+1}}$ to be bounded by $\overline{\mu} \leq 2\%$ and $\underline{\mu} \geq 0.78\%$. By Theorem~\ref{thm:main} it then follows that we need at least $119,237$ samples to accurately predict the safety label of the subsequent timesteps.

Using the aforementioned discarding approach, we can discard $95\%$ of the samples prior to instantiating the \gls{milp} constraints. The discarding approach is illustrated in Figure~\ref{fig:discards} where, for the purpose of visualization, we omit samples close to one another to prevent the image from being cluttered.
\begin{figure}[ht]
 	\includegraphics[width=1.09\columnwidth]{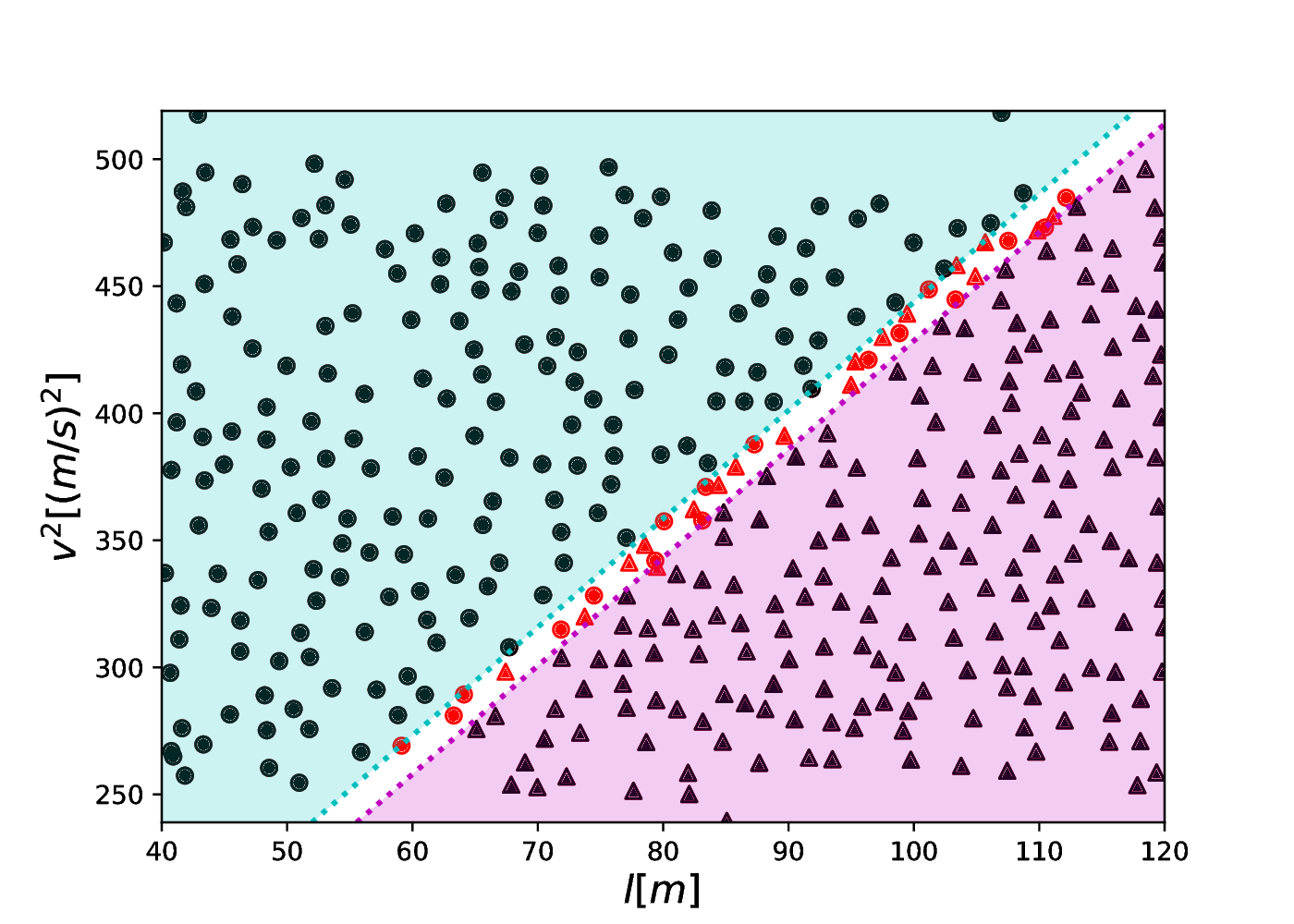}
	\caption{All samples in black are discarded, while the red samples are kept for the computation of the hypothesis. This results in $95\%$ of the samples being omitted, greatly improving the computational feasibility.}\label{fig:discards}
\end{figure}
The hypothesis in minimal disagreement with the labeled samples, computed by means of the \gls{milp} (\ref{eqn:minDissagreement2} -~\ref{eqn:con0}), is shown in Figure~\ref{fig:hypothesis}. Solving the \gls{milp} took 561 seconds, making the deployment of the approach computationally feasible. The number of violations, $v$, is 1335. We have made all code for generating and reproducing our results available online\footnote{\url{https://github.com/nikovert/lrn-moving-targets}}. 

We empirically validate our risk level by means of Monte Carlo simulations. For each run, we generate a new labeling mechanism $f_{m+1}$, corresponding to the random deterioration of the braking force and change to the vehicle mass. We then draw 5000 samples for which we evaluate the corresponding label (by means of $f_{m+1}$) and compare this with the label assigned by means of the hypothesis constructed by our methodology, thus calculating $\erhat{f_{m+1}}{h_m}$. We repeat this for 500 runs, each time generating a new label $f_{m+1}$. In Figure~\ref{fig:montecarlo} the frequency of certain $\erhat{f_{m+1}}{h_m}$ values is shown. 

Recall that $\mu$ was upper bounded by $2\%$, such that for the chosen $\epsilon=1\%$, Theorem \ref{thm:main} implies that $\er{f_{m+1}}{h_m} \leq 4\overline{\mu}+\eps = 9 \%$ with high confidence. The Monte Carlo simulation supports this, with the average empirical disagreement $\erhat{f_{m+1}}{h_m}$ being approximately $2.4\%$ (see Figure~\ref{fig:montecarlo}), well below the theoretically predicted $9 \%$.

\begin{figure}[ht]
 	\includegraphics[width=1.09\columnwidth]{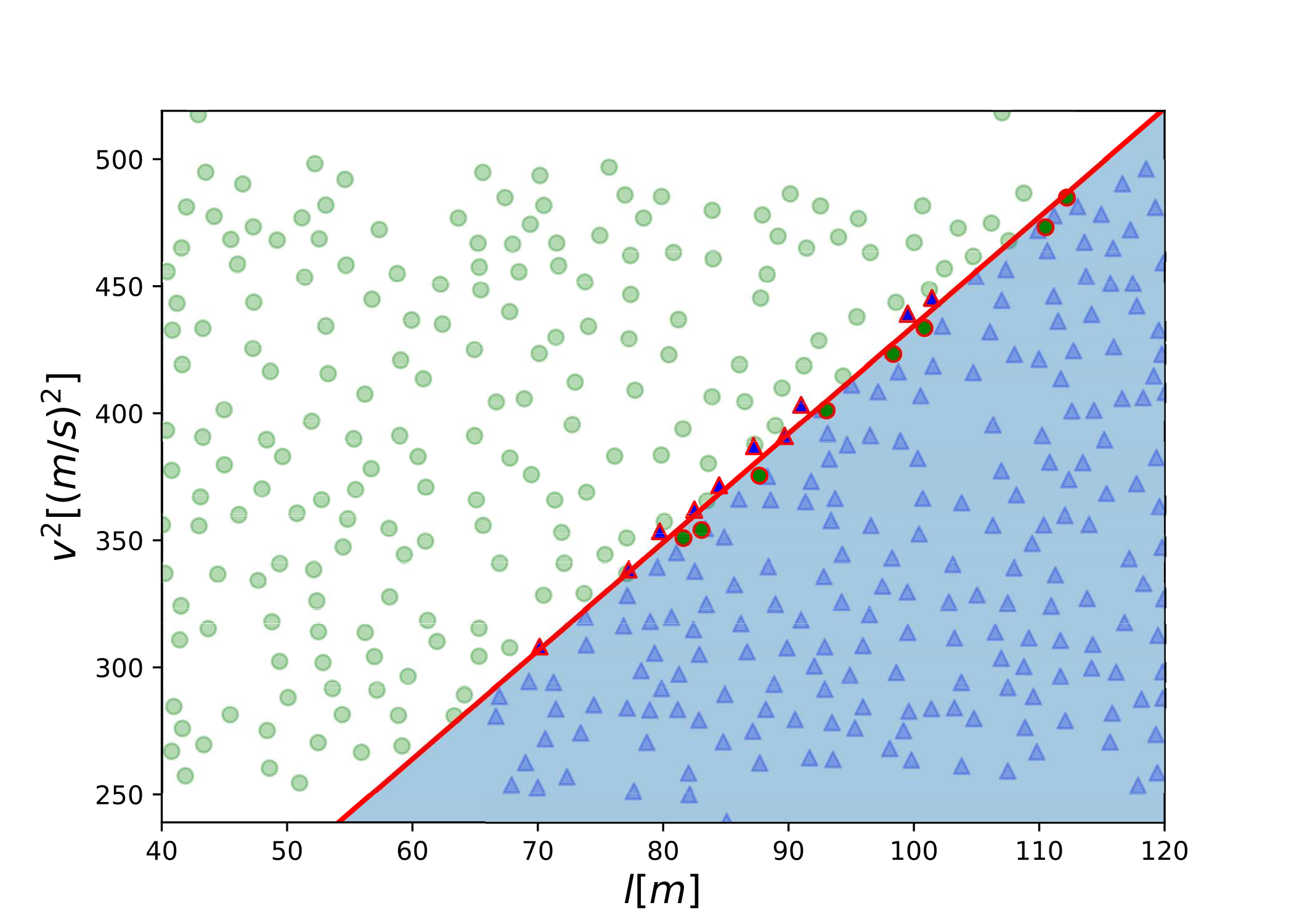}
	\caption{Generated hypothesis; we only show the halfplane responsible for the labeling, illustrated by red. For visual ease, we randomly omit samples close to one another. Red samples are violations as defined in~\eqref{eqn:vconstr}.}\label{fig:hypothesis}
\end{figure}

\begin{figure}[ht]
    \centering
    \includegraphics[width=\columnwidth]{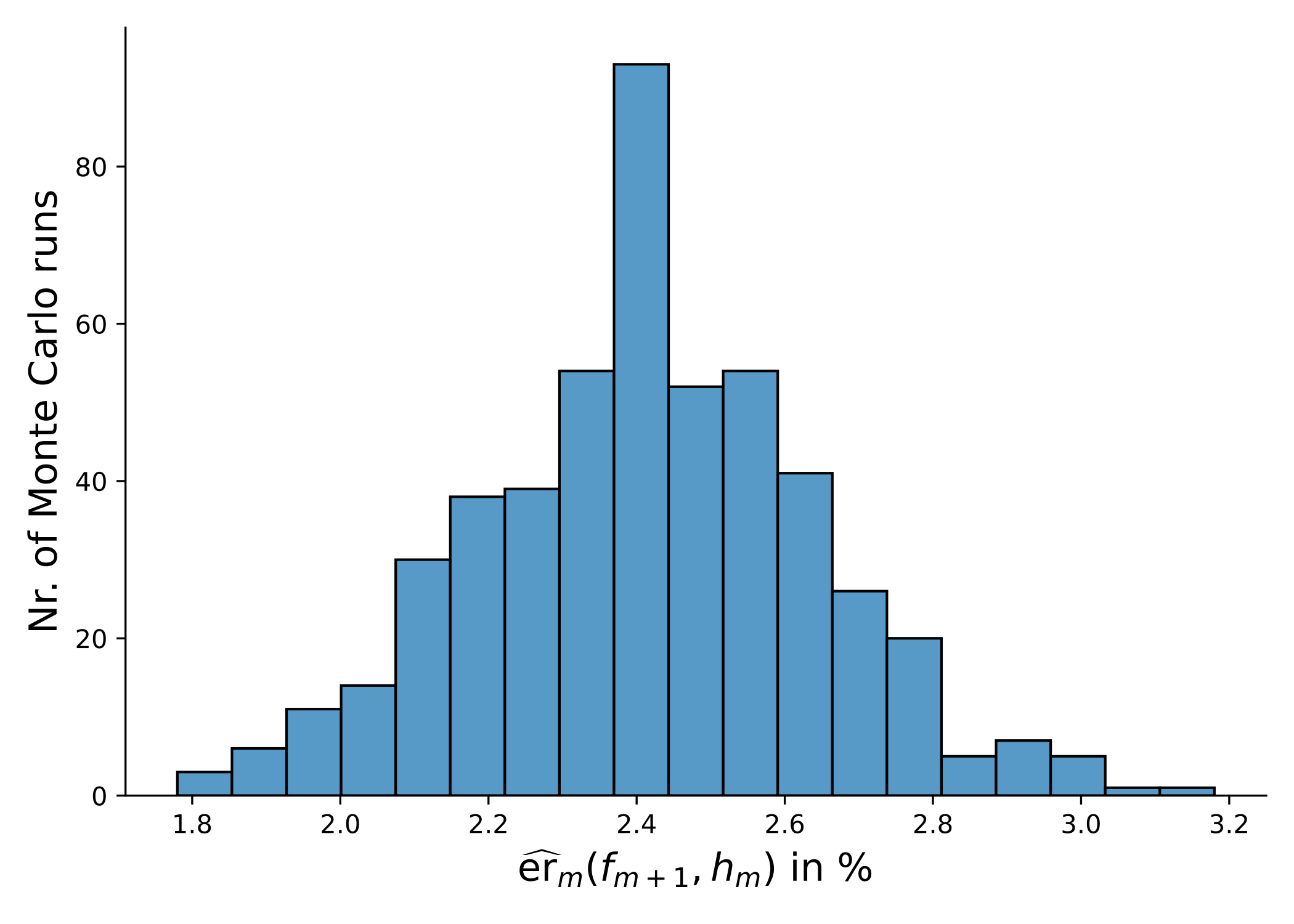}
	\caption{Empirical distribution of the disagreement $\erhat{f_{m+1}}{h_m}$, constructed by means of 500 Monte Carlo runs. }\label{fig:montecarlo}
\end{figure}

\section{Conclusion}\label{sec:5:conclusion}
We considered learning a moving target from a finite set of samples and showed that, when the labeling mechanism changes in a structured manner, it remains \gls{pac} learnable, meeting certain accuracy-confidence levels. Furthermore, for the class of convex polytopes, we presented a constructive method to generate the hypothesis based on a \acrfull{milp}. We illustrated the applicability of our theoretical developments to a case study involving an \acrfull{aeb} system. Future work aims at considering the distribution according to which samples are drawn to also be changing, similarly to \cite{Long1999}.

\appendix 
\section{Recovering the constant target case}
Following Remark \ref{rem:constant}, we show in the next result how Theorem \ref{thm:main} specializes to obtain probabilistic guarantees for a minimal disagreement hypothesis $h_m \in M_m$, for the case where the target is constant.
\begin{thm}
    Fix $\eps, \delta \in (0,1)$. 
    Denote by $d$ the \gls{vc} dimension of $\HH$, and consider $\underline{\mu} = \overline{\mu} = 0$.
    If we choose $m \geq m_0(\eps,\delta)$, where
    \begin{align}
            m_0(\eps,\delta) = \frac{5}{\eps} \Big( \ln\frac{4}{\delta} + d \ln\frac{40}{\eps}\Big), \label{app_app_eqn:m0}
    \end{align}
    we then have that for any $h_m \in M_m$,
    \begin{align}
        \mathbb{P}^m\{(x_1,\ldots,x_m) &\in \X^m:~ \nonumber\\
        &\er{f_{m+1}}{h_m} \leq \eps \} \geq 1-\delta. \label{app_eqn:thm2}
    \end{align}
\end{thm}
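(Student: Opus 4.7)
The plan is to mirror the structure of the proof of Theorem~\ref{thm:main}, but to exploit the fact that when $\underline{\mu} = \overline{\mu} = 0$ the approximation event $A$ from that proof has zero probability, so no splitting of the confidence $\delta$ is required and the full $\delta$ can be allocated to a single invocation of Theorem~\ref{thm:1}. This is the source of the cleaner sample size bound \eqref{app_app_eqn:m0}, which features $\ln\frac{4}{\delta}$ rather than $\ln\frac{8}{\delta}$, and drops the $\underline{\mu}$-dependent maximum entirely.

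First, I would use Assumption~\ref{assum:mu} together with $\underline{\mu} = \overline{\mu} = 0$ to conclude that $\mu = \frac{1}{m}\sum_{i=1}^m \er{f_i}{f_{m+1}} = 0$. Since each summand is non-negative, this forces $\er{f_i}{f_{m+1}} = 0$ for every $i = 1,\ldots,m$. By definition of $\er{\cdot}{\cdot}$, the set $\{x \in \X :~ f_i(x) \neq f_{m+1}(x)\}$ has $\Prb$-measure zero for each $i$, and hence, by i.i.d.\ sampling, the event $\{f_i(x_i) = f_{m+1}(x_i) \text{ for all } i = 1,\ldots,m\}$ has $\Prb^m$-probability one.

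On this full-probability event the empirical disagreement $\frac{1}{m}\sum_{i=1}^m |f_i(x_i) - f_{m+1}(x_i)| = 0$. Replaying the triangle inequality calculation \eqref{eqn:triangle} from the proof of Theorem~\ref{thm:main}, together with the minimal disagreement inequality \eqref{eqn:f_h}, namely $\sum_{i=1}^m |f_i(x_i) - h_m(x_i)| \leq \sum_{i=1}^m |f_i(x_i) - f_{m+1}(x_i)|$, yields $\erhat{f_{m+1}}{h_m} \leq 0$, i.e.\ $\erhat{f_{m+1}}{h_m} = 0$ $\Prb^m$-almost surely.

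Finally, I would invoke Theorem~\ref{thm:1} applied to $f = f_{m+1}$ with $\rho = 0$ and the full confidence $\delta$. Substituting $\rho = 0$ into the sample complexity bound \eqref{eqn:bound_m} simplifies exactly to $\frac{5}{\eps}\bigl(\ln\frac{4}{\delta} + d\ln\frac{40}{\eps}\bigr)$, matching \eqref{app_app_eqn:m0}. Theorem~\ref{thm:1} then guarantees that, with $\Prb^m$-probability at least $1 - \delta$, every $h \in \HH$ with $\erhat{f_{m+1}}{h} \leq 0$ also satisfies $\er{f_{m+1}}{h} \leq \eps$; since $h_m$ is such a hypothesis almost surely, \eqref{app_eqn:thm2} follows. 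I do not anticipate any real obstacle: the result is essentially a specialization of the case analysis already carried out for Theorem~\ref{thm:main}, and the only care needed is the almost-sure argument that $\erhat{f_{m+1}}{h_m} = 0$ when $\mu = 0$, which is where the null-set reasoning above enters.
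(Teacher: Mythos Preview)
Your proposal is correct and follows essentially the same approach as the paper: establish that $\erhat{f_{m+1}}{h_m}=0$ with $\Prb^m$-probability one, then invoke Theorem~\ref{thm:1} with $\rho=0$ and the full confidence $\delta$. The only minor difference is that the paper simply declares all target functions identical and substitutes $h=f_{m+1}$ directly into the minimal-disagreement definition to get $\erhat{f_{m+1}}{h_m}=0$ for every multisample, whereas you are more careful about the null-set issue (agreement only $\Prb$-a.e.) and route through the triangle inequality \eqref{eqn:triangle}; both arrive at the same conclusion.
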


\begin{proof}
Fix any $\eps, \delta \in (0,1)$. We follow the same proof-line with Theorem \ref{thm:main}, but since $\underline{\mu} = \overline{\mu} = 0$, all target functions are identical. To this end, let $f_i = f$, for all $i=1,\ldots,m,m+1$.
We define the following event:
\begin{align}
&E = \{(x_1,\ldots,x_m) \in \X^m :~ \er{f}{h_m} > \eps\}, \nonumber \\
&\widehat{E} = \{(x_1,\ldots,x_m) \in \X^m :~ \erhat{f}{h_m} = 0\}.
\end{align}
$\widehat{E}$ is the set of $m$-multisamples for which the empirical average disagreement between the (constant) target and the hypothesis, namely, $\frac{1}{m}\sum_{i=1}^m |f(x_i)- h_m(x_i)|$, is equal to zero.
Notice that since $h_m \in M_m$ (a minimal disagreement hypothesis), for any $m$-multisample, $\sum_{i=1}^m |f(x_i)- h_m(x_i)| \leq \sum_{i=1}^m |f(x_i)- h(x_i)|$ for any $h\in \HH$. Since the target function $f$ itself is an element of $\HH$, taking $h = f$ in the aforementioned statement directly leads to $\sum_{i=1}^m |f(x_i)- h_m(x_i)| \leq 0$, and hence $\Prb^m\{\widehat{E}\} = 1$.

To establish \eqref{app_eqn:thm2} it suffices to show that $\Prb^m\{E\} \leq \delta$. To this end, we have that
    \begin{align}
        & \Prb^m\{E\} \nonumber \\
        & = \Prb^m\{E \cap \widehat{E}\}\nonumber \\
        &=\Prb^m\{(x_1,\ldots,x_m)  \in \X^m:~
        \er{f}{h_m} > \eps \nonumber \\
        & \hspace{1.5cm}\text{and } \erhat{f}{h_m} =0 \}.
        \label{app_eqn:constantT}
    \end{align}
where the first equality is since $\Prb^m\{\widehat{E}\} = 1$, and the second one follows from the definition of $E$ and $\widehat{E}$. 

Notice that \eqref{app_eqn:constantT} takes the form of \eqref{eqn:thm:1}, with $f_{m+1}$, $h_m$ and $0$ in place of $f$, $h$ and $\rho$, respectively. Theorem \ref{thm:1} implies then that 
\begin{align}
m &\geq \frac{5}{\eps} \Big( \ln\frac{4}{\delta} + d \ln\frac{40}{\eps}\Big) 
\Longrightarrow~ \Prb^m\{E \cap \widehat{E}\} \leq \delta. \label{app_eqn:bound2} 
\end{align}
Therefore, by \eqref{app_eqn:constantT} and \eqref{app_eqn:bound2}  we have that $\Prb^m\{E\} \leq \delta$, thus concluding the proof.
\end{proof}

\eREV

\bibliographystyle{abbrv}        %
\bibliography{bibliography}           %

\begin{thebibliography}{10}

\bibitem{Alamo2009}
T.~Alamo, R.~Tempo, and E.~F. Camacho.
\newblock Randomized strategies for probabilistic solutions of uncertain feasibility and optimization problems.
\newblock {\em {IEEE} Trans. Autom. Control.}, 54(11):2545--2559, 2009.

\bibitem{Bartlett2000}
P.~L. Bartlett, S.~Ben{-}David, and S.~R. Kulkarni.
\newblock Learning changing concepts by exploiting the structure of change.
\newblock {\em Mach. Learn.}, 41(2):153--174, 2000.

\bibitem{Barve1997}
R.~D. Barve and P.~M. Long.
\newblock On the complexity of learning from drifting distributions.
\newblock {\em Inf. Comput.}, 138(2):170--193, 1997.

\bibitem{Bemporad1999}
A.~Bemporad and M.~Morari.
\newblock Control of systems integrating logic, dynamics, and constraints.
\newblock {\em Autom.}, 35(3):407--427, 1999.

\bibitem{Calafiore2010}
G.~C. Calafiore.
\newblock Random convex programs.
\newblock {\em {SIAM} J. Optim.}, 20(6):3427--3464, 2010.

\bibitem{Calafiore2006}
G.~C. Calafiore and M.~C. Campi.
\newblock The scenario approach to robust control design.
\newblock {\em {IEEE} Trans. Autom. Control.}, 51(5):742--753, 2006.

\bibitem{Campi2008}
M.~C. Campi and S.~Garatti.
\newblock The exact feasibility of randomized solutions of uncertain convex programs.
\newblock {\em {SIAM} J. Optim.}, 19(3):1211--1230, 2008.

\bibitem{Campi2010}
M.~C. Campi and S.~Garatti.
\newblock A sampling-and-discarding approach to\textasciitilde chance-constrained optimization: Feasibility\textasciitilde and\textasciitilde optimality.
\newblock {\em Journal of Optimization Theory and Applications}, 148(2):257--280, October 2010.

\bibitem{Campi2018}
M.~C. Campi and S.~Garatti.
\newblock {\em {Introduction to the Scenario Approach}}.
\newblock Society for Industrial and Applied Mathematics, Philadelphia, PA, November 2018.

\bibitem{Campi2016}
M.~C. Campi and S.~Garatti.
\newblock Wait-and-judge scenario optimization.
\newblock {\em Math. Program.}, 167(1):155--189, 2018.

\bibitem{Campi2023}
M.~C. Campi and S.~Garatti.
\newblock Compression, generalization and learning.
\newblock {\em J. Mach. Learn. Res.}, 24:339:1--339:74, 2023.

\bibitem{Campi2009}
M.~C. Campi, S.~Garatti, and M.~Prandini.
\newblock The scenario approach for systems and control design.
\newblock {\em Annu. Rev. Control.}, 33(2):149--157, 2009.

\bibitem{Campi2018a}
M.~C. Campi, S.~Garatti, and F.~A. Ramponi.
\newblock A general scenario theory for nonconvex optimization and decision making.
\newblock {\em {IEEE} Trans. Autom. Control.}, 63(12):4067--4078, 2018.

\bibitem{Cloete2025}
J.~Cloete, N.~Vertovec, and A.~Abate.
\newblock Sport - safe policy ratio: Certified training and deployment of task policies in model-free {RL}.
\newblock In {\em Proceedings of the Thirty-Fourth International Joint Conference on Artificial Intelligence, {IJCAI} 2025, Montreal, Canada, August 16-22, 2025}, pages 4976--4984. ijcai.org, 2025.

\bibitem{Crammer2010}
K.~Crammer, Y.~Mansour, E.~Even{-}Dar, and J.~W. Vaughan.
\newblock Regret minimization with concept drift.
\newblock In A.~T. Kalai and M.~Mohri, editors, {\em {COLT} 2010 - The 23rd Conference on Learning Theory, Haifa, Israel, June 27-29, 2010}, pages 168--180. Omnipress, 2010.

\bibitem{Dean2019}
S.~Dean, H.~Mania, N.~Matni, B.~Recht, and S.~Tu.
\newblock On the sample complexity of the linear quadratic regulator.
\newblock {\em Found. Comput. Math.}, 20(4):633--679, 2020.

\bibitem{Fele2021}
F.~Fele and K.~Margellos.
\newblock Probably approximately correct nash equilibrium learning.
\newblock {\em {IEEE} Trans. Autom. Control.}, 66(9):4238--4245, 2021.

\bibitem{Garatti2022}
S.~Garatti and M.~C. Campi.
\newblock Risk and complexity in scenario optimization.
\newblock {\em Math. Program.}, 191(1):243--279, 2022.

\bibitem{Hanneke2015}
S.~Hanneke, V.~Kanade, and L.~Yang.
\newblock Learning with a drifting target concept.
\newblock In K.~Chaudhuri, C.~Gentile, and S.~Zilles, editors, {\em Algorithmic Learning Theory - 26th International Conference, {ALT} 2015, Banff, AB, Canada, October 4-6, 2015, Proceedings}, volume 9355 of {\em Lecture Notes in Computer Science}, pages 149--164. Springer, 2015.

\bibitem{Helmbold1994}
D.~P. Helmbold and P.~M. Long.
\newblock Tracking drifting concepts by minimizing disagreements.
\newblock {\em Mach. Learn.}, 14(1):27--45, 1994.

\bibitem{Hoeffding1963}
W.~Hoeffding.
\newblock Probability inequalities for sums of bounded random variables.
\newblock {\em Journal of the American Statistical Association}, 58(301):13--30, 1963.

\bibitem{Kuh1990}
A.~Kuh, T.~Petsche, and R.~L. Rivest.
\newblock Learning time-varying concepts.
\newblock In R.~Lippmann, J.~E. Moody, and D.~S. Touretzky, editors, {\em Advances in Neural Information Processing Systems 3, {[NIPS} Conference, Denver, Colorado, USA, November 26-29, 1990]}, pages 183--189. Morgan Kaufmann, 1990.

\bibitem{Long1999}
P.~M. Long.
\newblock The complexity of learning according to two models of a drifting environment.
\newblock {\em Mach. Learn.}, 37(3):337--354, 1999.

\bibitem{Margellos2015}
K.~Margellos, M.~Prandini, and J.~Lygeros.
\newblock On the connection between compression learning and scenario based single-stage and cascading optimization problems.
\newblock {\em {IEEE} Trans. Autom. Control.}, 60(10):2716--2721, 2015.

\bibitem{Morari2001}
M.~Morari.
\newblock Hybrid system analysis and control via mixed integer optimization.
\newblock {\em IFAC Proceedings Volumes}, 34(25):1--12, 2001.
\newblock 6th IFAC Symposium on Dynamics and Control of Process Systems 2001, Jejudo Island, Korea, 4-6 June 2001.

\bibitem{Romao2023a}
L.~Romao, K.~Margellos, and A.~Papachristodoulou.
\newblock Probabilistic feasibility guarantees for convex scenario programs with an arbitrary number of discarded constraints.
\newblock {\em Autom.}, 149:110601, 2023.

\bibitem{Romao2023}
L.~Romao, A.~Papachristodoulou, and K.~Margellos.
\newblock On the exact feasibility of convex scenario programs with discarded constraints.
\newblock {\em {IEEE} Trans. Autom. Control.}, 68(4):1986--2001, 2023.

\bibitem{Tempo2005}
R.~Tempo, G.~Calafiore, and F.~Dabbene.
\newblock {\em Randomized algorithms for analysis and control of uncertain systems}.
\newblock Communications and control engineering series. Springer, London, 2005.

\bibitem{Vidyasagar2003}
M.~Vidyasagar.
\newblock {\em Learning and Generalisation}.
\newblock Springer London, 2003.

\end{thebibliography}

\end{document}